\newcommand{\R}{\mathbb{R}}
\newtheorem{theo}{Theorem}[section]
\newtheorem{coro}[theo]{Corollary}
\newtheorem{lemma}[theo]{Lemma}
\newtheorem{prop}[theo]{Proposition}
\theoremstyle{definition}
\newtheorem{remark}[theo]{Remark}
\newcommand{\cI}{{\mathcal I}}
\newcommand{\cM}{{\mathcal M}}   
\newcommand{\cN}{{\mathcal N}}
\newcommand{\cU}{{\mathcal U}}
\renewcommand{\epsilon}{\varepsilon}
\newcommand{\Sn}{{\mathbb S}^{N-1}}
\numberwithin{equation}{section}
\title{Qualitative properties of coexistence and semi-trivial limit profiles of nonautonomous nonlinear parabolic Dirichlet systems}
\author{}
\author{Alberto Salda\~{n}a\footnote{D\'{e}partement de Math\'{e}matique, Universit\'{e} libre de Bruxelles, Bd du Triomphe 1050 Bruxelles. Belgium Campus Plaine, asaldana@ulb.ac.be.}}
\date{}
\begin{document}
\maketitle

\begin{abstract}
We study the symmetry properties of limit profiles of nonautonomous nonlinear parabolic systems with Dirichlet boundary conditions in radial bounded domains. In the case of competitive systems, we show that if the initial profiles satisfy a reflectional inequality with respect to a hyperplane, then all limit profiles are foliated Schwarz symmetric with respect to antipodal points. One of the main ingredients in the proof is a new parabolic version of Serrin's boundary point lemma. Results on radial symmetry of semi-trivial profiles are discussed also for noncompetitive systems.
\end{abstract}
{\footnotesize
\begin{center}
\textit{Keywords:} Lotka-Volterra systems, competition, rotating plane method
\end{center}
\begin{center}
\textit{2010 Mathematics Subject Classification:} 35K51 (primary), 35B40 (secondary)
\end{center}
}

\section{Introduction}

The main motivation for this paper is the study of the asymptotic shape of positive global solutions of the following nonautonomous nonlinear parabolic problem.
\begin{equation}\label{model:competitive:dirichlet}
\begin{aligned}
 (u_i)_t-\Delta u_i&=f_i(t,|x|,u_i)-\alpha_i(|x|,t) u_1u_2, &&\ x\in B,\ t>0,\\
 u_i(x,t)&=0, &&\  x\in \partial B,\ t>0,\\
 u_i(x,0)&=u_{0,i}(x), &&\ x\in B
\end{aligned}
\end{equation}
for $i=1,2.$  Here and in the remainder of the paper, $B$ denotes a ball or an annulus in $\mathbb R^N$ with $N\geq 2.$ The nonlinearity $f$ and the coefficients $\alpha_i$ satisfy some smoothness assumptions.  As an important example of a problem to which our results apply we have the competitive Lotka-Volterra system, that is,
 \begin{equation}\label{Lotka:Volterra:system}
 \begin{aligned}
 (u_1)_t-\Delta u_1 &= a_1(t) u_1 - b_1(t) u_1^2 -\alpha_1(t) u_1u_2,&&\quad x \in B,\ t>0,\\
 (u_2)_t-\Delta u_2 &= a_2(t) u_2 - b_2(t) u_2^2 -\alpha_2(t) u_1u_2,&&\quad x\in B,\ t>0,\\
 u_1=u_2&=0 &&\quad \text{on $\partial B \times (0,\infty)$,}\\
 u_i(x,0)&=u_{0,i}(x)\geq 0&&\quad \text{for $x\in B,$ $i=1,2$,}
 \end{aligned}
 \end{equation}
 where for some $\beta\in(0,1)$ and $i=1,2,$ we have that
 \begin{equation}\label{Lotka:coefficients}
   \begin{aligned}
   a_i,\ b_i,\ \alpha_i \in C^\beta((0,\infty))\cap L^\infty((0,\infty))\qquad\text{ and }\qquad \inf_{t>0} \alpha_i(t)>0.
   \end{aligned}
 \end{equation}

For a discussion on the history and the relevance of system \eqref{Lotka:Volterra:system} we refer to \cite{saldana:weth:systems, cantrell:cosner} and the references therein.
 
To describe the asymptotic shape of a solution $u=(u_1,u_2)$ of \eqref{Lotka:Volterra:system} we use the omega limit set, that is, 
\begin{align}\label{omega:u}
\omega(u)&= \omega(u_1,u_2):=\{(z_1,z_2)\in C(\overline{B})\times C(\overline{B})\::\:  \\&
\lim_{n\to\infty}\|u_1(\cdot,t_n)-z_i\|_{L^\infty(B)}+\|u_2(\cdot,t_n)-z_i\|_{L^\infty(B)}=0 \text{ for some } t_n\to\infty\}.\nonumber
\end{align}
This set is nonempty, compact, and connected for uniformly bounded solutions of \eqref{Lotka:Volterra:system}, see Lemma \ref{regularity:lemma} below. Elements of $\omega(u)$ are called \emph{limit profiles}.  If $z=(z_1,z_2)\in\omega(u)$ is such that $z_1\not\equiv 0$ and $z_2\not\equiv 0$ in $B,$ then we call $z$ a \emph{coexistence limit profile}.  If $z_1\equiv 0$ and $z_2\not \equiv 0$ or if $z_2\equiv 0$ and $z_1\not \equiv 0,$ then we call $z$ a \emph{semi-trivial limit profile}. In this paper we present results on the symmetry of both kinds of limit profiles. The proofs rely on a variant of the moving plane method \textemdash often called \emph{rotating plane method}\textemdash and on stability arguments.

Let us introduce some notation that will be used throughout the paper. Let $\Sn=\{x\in\mathbb R^N: |x|=1\}$ be the unit sphere in
 $\R^N,$ $N\geq 2$.  For a vector $e\in \Sn$, we consider the hyperplane $H(e):=\{x\in \mathbb R^N: x\cdot e=0\}$ and the half domain $B(e):=\{x\in B: x\cdot e>0\}.$  We write also $\sigma_e: \overline{B}\to \overline{B}$ to denote reflection with respect to $H(e),$
 i.e. $\sigma_e(x):=x-2(x\cdot e)e$ for each $x\in B.$ Following \cite{smets}, we say that a function $u\in C(B)$ is \textit{foliated Schwarz symmetric with respect to some unit vector $p\in \Sn$} if $u$ is axially symmetric with respect to the axis $\mathbb R p$ and nonincreasing in the polar angle $\theta:= \operatorname{arccos}(\frac{x}{|x|}\cdot p)\in [0,\pi].$ 

Our main result for system \eqref{Lotka:Volterra:system} is the following.
 
\begin{theo}
Suppose that \eqref{Lotka:coefficients} holds and let $u=(u_1,u_2)$ be a classical solution of \eqref{Lotka:Volterra:system} such that
$u_i\in L^\infty(B \times (0,\infty))$ for
$i=1,2$. Moreover, assume that 
$$
\leqno{\rm (h0)}\quad u_{0,1} \geq u_{0,1} \circ \sigma_e, \:u_{0,2} \leq
u_{0,2}\circ \sigma_e \quad \text{in $B(e)$}\ \
\left \{
\begin{aligned}
&\text{for some $e\in \Sn$ with}\\
&\text{$u_{0,i}\not\equiv u_{0,i}\circ \sigma_e$ for $i=1,2$.}  
\end{aligned}
\right.
$$
Then there is some $p\in \Sn$ such that all elements $(z_1,z_2)\in\omega(u)$ satisfy that $z_1$ is foliated Schwarz symmetric with respect to $p$ and $z_2$ is foliated Schwarz symmetric with respect to $-p.$ 
\end{theo}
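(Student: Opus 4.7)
The plan is to use a rotating plane method on reflection differences, exploiting the fact that these differences reverse the sign of the competitive coupling and thus generate a \emph{cooperative} linear parabolic system. For $e'\in\Sn$, set
\begin{equation*}
w_1^{e'}(x,t):=u_1(x,t)-u_1(\sigma_{e'}(x),t),\qquad w_2^{e'}(x,t):=u_2(\sigma_{e'}(x),t)-u_2(x,t).
\end{equation*}
Since $f_i$ depends on $x$ only through $|x|$ and the coefficients depend only on $t$, a direct subtraction using the algebraic identity
\begin{equation*}
u_1u_2-(u_1\circ\sigma_{e'})(u_2\circ\sigma_{e'})=(u_2\circ\sigma_{e'})\,w_1^{e'}-u_1\,w_2^{e'}
\end{equation*}
shows that $(w_1^{e'},w_2^{e'})$ satisfies a linear parabolic system on $B(e')\times(0,\infty)$ with bounded coefficients, nonnegative off-diagonal coupling (hence cooperative), and vanishing lateral boundary data on $\partial B(e')\times(0,\infty)$.

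By hypothesis (h0), the pair $(w_1^e(\cdot,0),w_2^e(\cdot,0))$ is nonnegative and not identically zero in $B(e)$. The weak and strong parabolic maximum principles for cooperative systems then yield $w_1^e,w_2^e>0$ strictly in $B(e)\times(0,\infty)$. For any $(z_1,z_2)\in\omega(u)$, passing to the limit along a sequence $t_n\to\infty$ with $u_i(\cdot,t_n)\to z_i$ preserves the weak inequality and gives
\begin{equation*}
z_1\geq z_1\circ\sigma_e,\qquad z_2\leq z_2\circ\sigma_e\quad\text{in }B(e).
\end{equation*}

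To upgrade this single reflection inequality to foliated Schwarz symmetry, I would rotate the direction $e$. Fix a two-dimensional plane through the origin containing $e$, parametrise the corresponding great circle by $e_\theta$ with $e_0=e$, and introduce
\begin{equation*}
\theta^{*}:=\sup\bigl\{\theta\in[0,\pi]\,:\,w_1^{e_s}(\cdot,t),\,w_2^{e_s}(\cdot,t)\geq 0\text{ in }B(e_s)\text{ for every }t>0\text{ and every }s\in[0,\theta]\bigr\}.
\end{equation*}
Continuous dependence of $w_i^{e'}$ on $e'$ implies that the supremum is attained. The decisive claim is that either $\theta^{*}=\pi$, or else $H(e_{\theta^{*}})$ is an exact symmetry hyperplane of $u(\cdot,t)$ for every $t>0$, that is, $w_1^{e_{\theta^{*}}}\equiv w_2^{e_{\theta^{*}}}\equiv 0$ on $B(e_{\theta^{*}})\times(0,\infty)$. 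Performing this construction in every great circle through $e$, all the resulting symmetry hyperplanes contain a common axis $\R p$, and the strict inequalities established above force monotonicity in the polar angle measured from $\pm p$. This yields the foliated Schwarz symmetry of $z_1$ with respect to $p$ and of $z_2$ with respect to $-p$ for every $(z_1,z_2)\in\omega(u)$.

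The main obstacle is precisely the decisive claim at $\theta^{*}$. In the interior of $B(e_{\theta^{*}})$, the strong maximum principle for cooperative systems rules out the strict-inequality alternative, but at the corner set $H(e_{\theta^{*}})\cap\partial B$ the domain $B(e_{\theta^{*}})$ is not smooth, so the classical Hopf lemma does not apply, and a rotation beyond $\theta^{*}$ could in principle fail only in an arbitrarily thin neighbourhood of this corner. This is the situation addressed by the new parabolic version of Serrin's boundary point lemma announced in the abstract: applied to the cooperative system at $e_{\theta^{*}}$, it produces a uniform inward growth estimate for $(w_1^{e_{\theta^{*}}},w_2^{e_{\theta^{*}}})$ along the corner which, by continuity in $e'$, survives a small further rotation and thus contradicts the maximality of $\theta^{*}$, forcing the symmetry alternative.
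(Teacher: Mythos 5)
Your proposal follows the same overall strategy as the paper: reflect, observe that the difference pair solves a cooperative linear system with nonnegative off-diagonal coupling, rotate the hyperplane, and close the rotation step via a parabolic Serrin corner lemma plus a Brock-type characterization. The algebraic identity you write for the coupling is correct and matches the paper's linearization (Section 4.1). However, there are two genuine gaps.

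The first and most serious gap is that your perturbation step at $\theta^*$ silently assumes a uniform-in-time lower bound on $\|w_1^{e_{\theta^*}}(\cdot,t)\|_{L^\infty}$, and this fails precisely when $\omega(u)$ contains a semi-trivial limit profile. If, say, $u_1(\cdot,t_n)\to 0$ along some sequence $t_n\to\infty$, then $w_1^{e_{\theta^*}}(\cdot,t_n)\to 0$ uniformly, and the off-diagonal coefficient $\alpha_1 u_1$ in the equation for $w_2^{e_{\theta^*}}$ degenerates simultaneously. The Serrin corner estimate then gives nothing uniform in $t$: the constants in Lemmas \ref{estimate1}--\ref{estimate2} depend on the lower bound $k$ in hypothesis $(H_k)$, which is exactly what is lost. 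The paper resolves this with a normalization: one replaces $u_1$ by $v_n=u_1/\|u_1(\cdot,t_n)\|_{L^\infty}$ and invokes the H\'uska--Pol\'a\v{c}ik--Safonov estimates (Lemma \ref{huska:polacik:safonov:lemma} and Corollary \ref{huska:polacik:safonov:corollary}) to get a two-sided bound on $v_n$ and, crucially, a lower bound on the normalized coupling $\tilde c_{12}=\alpha_1 v_n$ away from $\partial B$. Without this, Proposition \ref{general:framework:prop}(i) cannot be applied and the rotation cannot be continued.

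The second gap concerns your ``decisive claim'' at $\theta^*$. You assert that if $\theta^*<\pi$ then $H(e_{\theta^*})$ is an exact symmetry hyperplane of $u(\cdot,t)$ for \emph{every} $t>0$. That is too strong and is not what can be proved in the parabolic setting: the equality is only obtained for the \emph{limit profiles} $z\in\omega(u)$ (compare Lemma \ref{normalization:lemma:2:dirichlet}). Relatedly, your definition of $\theta^*$ requires $w_i^{e_s}\geq 0$ for \emph{every} $t>0$, but the perturbation argument, even when it works, only yields $w_i^{e'}(\cdot,T+1)\geq 0$ at one later time and then $w_i^{e'}>0$ for $t>T+1$ by the maximum principle for cooperative systems. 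You therefore need to formulate the rotation in terms of \emph{eventual} positivity, as the paper does with the set $\cN=\{e:\, u^e_i>0\text{ in }B(e)\times[T,\infty)\text{ for some }T>0\}$, and then apply the Brock characterization to the limit set $\cU=\omega(u_1)\cup(-\omega(u_2))$ rather than to $u$ itself. Once both fixes are incorporated, your outline coincides with the paper's proof.
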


This theorem is an direct consequence of the next more general result. In the following $C^{a,a/2}(Q)$ for $a>0$ denotes the standard (parabolic) H\"{o}lder space on $Q\subset R^{N+1}$, see \eqref{holder:norms}. Fix $I_B:=\{|x|\::\: x\in\overline{B}\}.$

\begin{theo}\label{main:theorem}
Let $u_1,u_2\in C^{2,1}(\overline{B}\times(0,\infty))\cap C(\overline{B}\times[0,\infty))\cap L^\infty(B\times(0,\infty))$ be nonnegative functions such that $u=(u_1,u_2)$ is a solution of system \eqref{model:competitive:dirichlet} where the following holds.
\begin{itemize} 
\item[(h1)] For $i=1,2,$ the nonlinearity $f_i:[0,\infty)\times I_B\times [0,\infty)\to \R,$ $(t,r,v)\mapsto f_i(t,r,v)$ is continuously differentiable in $v.$ Further, the functions $f_i$ and $\partial_v f_i$ are H\"{o}lder continuous in $t$ and $r$ for all $v\in [0,\infty),$ and locally Lipschitz continuous in $v$ uniformly with respect to $t$ and $r.$ In other words, there is $\gamma>0$ such that, for every $h\in\{f_i, \partial_v f_i \::\: i=1,2\},$ 
\begin{align*}
h(\cdot,\cdot,v)\in C^{\gamma,\gamma/2}(B\times(0,\infty)) \qquad \text{ for all }v\in[0,\infty)
\end{align*}
and 
\begin{align*}
\sup_{\genfrac{}{}{0pt}{}{\scriptstyle{r\in I_B,
      t>0,}}{\scriptstyle{v,\bar v\in K, v\neq\bar
      v}}}\!\!\!\frac{|h(t,r,v)-h(t,r,\bar v)|}{|v-\bar v|}<
\infty
\end{align*}
for any compact subset $K\subset [0,\infty).$  Moreover $f_i(t,r,0)=0$ for all $r\in I_B,$ $t>0,$ and $i\in\{1,2\}.$
\item[(h2)] There are positive constants $\alpha^*,$ $\alpha_*,$ and $\beta$ such that $\alpha_i\in C^{\beta,\beta/2}(I_B\times(0,\infty))$ and  $\alpha_* \le \alpha_i(r,t)\leq \alpha^*$ for all $r\in I_B,$ $t>0,$ and $i\in\{1,2\}.$
\item[(h3)] There is $e\in \Sn$ for which $u_{0,1}\not\equiv u_{0,1}\circ \sigma_e,$ $u_{0,2}\not\equiv u_{0,2}\circ \sigma_e,$ and $u_{0,1}\geq u_{0,1} \circ \sigma_e, \:u_{0,1} \leq u_{0,2}\circ \sigma_e$ in $B(e).$
\end{itemize}
Then there is some $p\in \Sn$ such that all elements $(z_1,z_2)\in\omega(u)$ satisfy that $z_1$ is foliated Schwarz symmetric with respect to $p,$ and $z_2$ is foliated Schwarz symmetric with respect to $-p.$
\end{theo}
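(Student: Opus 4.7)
The plan is to apply a rotating-plane argument to the pair $(u_1,u_2)$ after a cooperative linearisation at each reflecting hyperplane, and then transfer the resulting directional inequalities to every element of $\omega(u)$ by passing to the limit along sequences $t_n\to\infty$ that define a limit profile.

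\textbf{Cooperative linearisation.} For $e'\in\Sn$ set $\tilde u_i(x,t):=u_i(\sigma_{e'}(x),t)$ and
\[
w_1^{e'}:=u_1-\tilde u_1,\qquad w_2^{e'}:=\tilde u_2-u_2,
\]
both on $\overline{B(e')}\times[0,\infty)$. Since $f_i$ and $\alpha_i$ are radial in the spatial variable, $\tilde u_i$ solves the same system as $u_i$. Subtracting, using $f_i(t,|x|,u_i)-f_i(t,|x|,\tilde u_i)=c_i^{e'}(u_i-\tilde u_i)$ with $c_i^{e'}:=\int_0^1\partial_v f_i(t,|x|,\tilde u_i+s(u_i-\tilde u_i))\,ds$, and the identity $u_1u_2-\tilde u_1\tilde u_2=u_1(u_2-\tilde u_2)+\tilde u_2(u_1-\tilde u_1)$, one finds that $(w_1^{e'},w_2^{e'})$ satisfies the linear parabolic system
\begin{align*}
\partial_t w_1^{e'}-\Delta w_1^{e'} &= (c_1^{e'}-\alpha_1\tilde u_2)\,w_1^{e'}+\alpha_1 u_1\,w_2^{e'},\\
\partial_t w_2^{e'}-\Delta w_2^{e'} &= (c_2^{e'}-\alpha_2 u_1)\,w_2^{e'}+\alpha_2\tilde u_2\,w_1^{e'},
\end{align*}
on $B(e')\times(0,\infty)$, with $w_1^{e'}=w_2^{e'}=0$ on $\partial B(e')$ (Dirichlet on $\partial B$, and $\sigma_{e'}=\mathrm{id}$ on $H(e')\cap\overline B$). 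By (h1)--(h2) and the $L^\infty$ bound on $u_i$ the coefficients are bounded and H\"older; the crucial structural point is the nonnegativity of the off-diagonals $\alpha_1 u_1,\alpha_2\tilde u_2$, which makes the system cooperative.

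\textbf{Rotating plane.} By (h3), $w_1^{e}(\cdot,0),w_2^{e}(\cdot,0)\geq 0$ are nontrivial on $B(e)$, and the strong maximum principle and Hopf lemma for cooperative parabolic systems give $w_1^{e},w_2^{e}>0$ in $B(e)\times(0,\infty)$ with strictly negative inner normal derivatives along $H(e)\cap B$. Set
\[
\mathcal E:=\{e'\in\Sn:\ w_1^{e'}\geq 0\ \text{and}\ w_2^{e'}\geq 0\ \text{on}\ B(e')\times[0,\infty)\},
\]
a closed subset of $\Sn$ containing $e$. The core stability statement is: if $e^*\in\mathcal E$ and $(w_1^{e^*},w_2^{e^*})\not\equiv(0,0)$, then the reflectional inequality is preserved under sufficiently small rotations of $e^*$, so $e^*\in\mathrm{int}\,\mathcal E$. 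Rotating along great circles from $e$ and invoking a standard geometric analysis on $\Sn$, $\mathcal E$ must then contain a closed hemisphere $\{e'\cdot p\geq 0\}$ for some $p\in\Sn$. Passing to a limit profile is now immediate: for $(z_1,z_2)\in\omega(u)$ and $u(\cdot,t_n)\to z$ uniformly on $\overline B$, the inequalities $u_1(x,t_n)\geq u_1(\sigma_{e'}x,t_n)$ and $u_2(x,t_n)\leq u_2(\sigma_{e'}x,t_n)$ for every $e'$ with $e'\cdot p\geq 0$ pass to the limit on $B(e')$, giving $z_1\geq z_1\circ\sigma_{e'}$ and $z_2\leq z_2\circ\sigma_{e'}$. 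This reflectional inequality on every hyperplane through $\R p$ is the standard characterisation of $z_1$ being foliated Schwarz symmetric with respect to $p$ and $z_2$ with respect to $-p$.

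\textbf{Main obstacle.} The delicate step is the stability claim for $\mathcal E$ at a nondegenerate direction $e^*$. On a bounded time interval $[0,T]$, a compactness argument combined with the cooperative strong maximum principle and the Hopf lemma handles the interior and the smooth pieces of $\partial B(e^*)$; however, to rule out tangential vanishing of $(w_1^{e^*},w_2^{e^*})$ at corner points where $H(e^*)$ meets $\partial B$ the classical parabolic Hopf lemma is insufficient, and it is precisely at these corners that the new parabolic version of Serrin's boundary point lemma announced in the abstract is indispensable. Extending the bound from $[0,T]$ to $[0,\infty)$ then relies on the global $L^\infty$ bound on $u_i$ and parabolic regularity (Lemma~\ref{regularity:lemma}), which provide the uniform estimates required to run the above perturbation argument at all times.
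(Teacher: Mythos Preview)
Your linearisation and the idea of a rotating-plane set are right, but two things go wrong once you try to make the argument precise.

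\textbf{The set $\mathcal E$ is too rigid.} You require $w_i^{e'}\geq 0$ on $B(e')\times[0,\infty)$, i.e.\ from $t=0$ on. For the initial direction $e$ this holds by (h3); but for any $e'\neq e$ the initial data $w_i^{e'}(\cdot,0)$ are typically sign-changing, and the perturbation argument (Serrin corner lemma plus Hopf) only produces $w_i^{e'}(\cdot,T)\geq 0$ at some \emph{later} time $T$, which then propagates forward by the maximum principle but not backward. With your definition the stability step ``$e^*\in\mathrm{int}\,\mathcal E$'' simply fails. The paper works instead with
\[
\cN:=\{e'\in\Sn:\ u_i^{e'}>0\ \text{in }B(e')\times[T,\infty)\ \text{for some }T>0,\ i=1,2\},
\]
i.e.\ \emph{eventual} positivity; this is exactly the set on which the perturbation-plus-propagation mechanism yields openness (Lemma~\ref{M:open:neumann:dirichlet}).

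\textbf{The normalisation step is missing.} Your ``extend from $[0,T]$ to $[0,\infty)$ by uniform estimates'' hides the real difficulty. The perturbation radius $\rho$ in Lemma~\ref{perturbationlemma:dirichlet}/Proposition~\ref{general:framework:prop} depends on a \emph{lower} bound $k$ for $\|w_i^{e^*}\|_{L^\infty}$ on a unit time window (hypothesis $(H_k)$). If a semi-trivial limit profile occurs, say $u_1(\cdot,t_n)\to 0$, then $w_1^{e^*}(\cdot,t_n)\to 0$ as well, the lower bound degenerates, and no single $\rho$ works for all large times. The paper fixes this by normalising $v_n:=u_1/\|u_1(\cdot,t_n)\|_{L^\infty(B)}$ and appealing to the H\'uska--Pol\'a\v{c}ik--Safonov estimates (Corollary~\ref{huska:polacik:safonov:corollary}) to get \emph{uniform} upper and lower bounds on $v_n$ on the windows $[t_n-3,t_n+3]$; this restores the lower bound required in $(H_k)$ and also keeps the off-diagonal coefficient $\tilde c_{12}=\alpha_1 v_n$ bounded below (see \eqref{c21:equation}). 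Without this normalisation your contradiction argument at $\hat e\in\partial\cN$ breaks precisely when a component is going extinct, which is the case the theorem must cover. The paper then concludes via the characterisation of Corollary~\ref{sec:symm-char}, not by asserting that the good set contains a hemisphere.
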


Theorem \ref{main:theorem} is a Dirichlet analog of \cite[Theorem 1.3]{saldana:weth:systems}, where the asymptotic shape of solutions of the Neumann version of system \eqref{Lotka:Volterra:system} is studied, and also foliated Schwarz symmetry with respect to antipodal points is proved.  In both theorems assumption (h3) is essential to perform a rotating plane method and the symmetry result can not hold in general without it; see \cite{saldana:weth:bounded} and \cite{saldana:weth:systems} for a discussion in this regard.

One of the main ingredients in the proofs in \cite{saldana:weth:systems} is an extension of the solution using a reflection with respect to the boundary of the domain. This extension is only possible under Neumann boundary conditions, and in fact, the main perturbation tool \cite[Lemma 3.1]{saldana:weth:systems} can not hold under Dirichlet boundary conditions. 

In this paper, to prove Theorem \ref{main:theorem}, we use a new parabolic version of Serrin's boundary point lemma \cite[Lemma 1]{serrin} that provides quantitative information on the second derivatives at corner points. This information is used to perform a perturbation argument within a rotating plane method scheme. A parabolic version of Serrin's boundary point lemma was obtained in \cite[Lemma 2.1]{alessandri}, where the result from Serrin \cite[Theorem 1]{serrin} is extended to the parabolic setting; however, this version is insufficient for our purposes, since it is crucial to obtain estimates independent of a specific solution, see Lemmas \ref{estimate1} and \ref{estimate2} below.

In this setting, to perform the perturbation argument one needs some equicontinuity of the solution up to second derivatives.  This is the reason why the right hand side in \eqref{Lotka:Volterra:system} and in \eqref{model:competitive:dirichlet} is required to satisfy some H\"{o}lder regularity (this assumption is not needed in \cite{saldana:weth:systems}, but it is a common hypothesis in the study of nonautonomous Lotka-Volterra models, see \cite[Section 5.5]{cantrell:cosner}). With this hypothesis one can guarantee using some standard regularity techniques as in \cite{polacik} and \cite{babin} that the solutions $u_1$ and $u_2$ belong to $C^{2+\gamma,1+\gamma/2}(B)$ for some $\gamma\in(0,1)$ (see Lemma \ref{regularity:lemma} below). 

As already noted in \cite{saldana:weth:systems}, the possibility of the existence of semi-trivial limit profiles presents a complication for the perturbation argument, which depends on lower bounds for some $L^\infty-$norms of solutions to a linearized problem (see Proposition \ref{general:framework:prop} below).  In \cite{saldana:weth:systems} this complication was circumvented using a normalization argument relying again on estimates which only hold for the Neumann problem. Here we adapt this argument to the Dirichlet case using some sharp estimates due to H\'{u}ska, Pol\'{a}\v{c}ik, and Safonov \cite{huska:polacik:safonov} for positive solutions of homogeneous linear problems.

For a class of parabolic cooperative systems, F\"{o}ldes and Pol\'{a}\v{c}ik  \cite{polacik:systems} proved in particular that, if the domain is a ball and the nonlinearity satisfies some monotonicity assumptions, positive solutions are asymptotically radially symmetric and radially decreasing.  Their proofs rely on maximum principles for small domains, which is a different approach from ours and can not be applied here directly (see also Remark \ref{remark:maximum:principles:for:small:domains} in this regard). See also \cite{saldana:weth:bounded} for a version of Theorem \ref{main:theorem} for more general scalar equations using maximum principles for small domains.  

As explained in \cite{saldana:weth:systems,cantrell:cosner}, systems of the type \eqref{model:competitive:dirichlet} are commonly used to model population dynamics. In this setting, the existence of a semi-trivial limit profile can be interpreted as the asymptotic extinction of one of the species.  In such a situation, one may guess that the population density of the remaining species, in the (asymptotic) absence of a competing (or symbiotic) species, is likely to become asymptotically radially symmetric in $B.$  We show in \cite{saldana-phd} that this is indeed the case if additional assumptions are made.  

Let $\lambda_1>0$ denote the first Dirichlet eigenvalue of the Laplacian in $B.$
\begin{theo}[Theorem 5.2 in \cite{saldana-phd}]\label{main:theorem:3} Let $u_1,u_2\in C^{2,1}(\overline{B}\times(0,\infty))\cap C(\overline{B}\times[0,\infty))\cap L^\infty(B\times(0,\infty))$ be nonnegative functions such that $u=(u_1,u_2)$ is a classical solution of 
\begin{equation*}
\begin{aligned}
 (u_i)_t-\Delta u_i &= f_i(u_i) - \alpha_i(x,t) u_1u_2 && \quad \text{ in } B\times(0,\infty),\\
 u_i&=0 &&\quad \text{ on } \partial B\times(0,\infty),
\end{aligned}
\end{equation*}
where $u(\cdot,0)\equiv u_{0,i}\in C(\overline{B})$ is not identically zero, $\alpha_i\in L^\infty(B\times(0,\infty)),$ $f_i\in C^1(\R)$ is strictly concave in $[0,\infty)$, $f_i(0)=0,$ and  $\lim\limits_{s\to\infty}f_i(s)=-\infty$ for $i=1,2.$  If $f'_1(0)>\lambda_1$ and $f'_2(0)>\lambda_1,$ then all the semi-trivial limit profiles of $u$ are radially symmetric. In particular, if there is $(z,0)\in \omega(u),$ then $z$ is radially symmetric and it is the unique positive solution of $-\Delta z = f_1(z)$ in $B$. The analogous claim holds if there is $(0,z)\in \omega(u)$ with $z\not\equiv 0.$
\end{theo}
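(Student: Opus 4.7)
The plan is to reduce the problem to the autonomous scalar equation satisfied by $u_1$, use a Lyapunov argument to force $z$ to be an equilibrium, invoke the Brezis--Oswald uniqueness theorem for positive solutions, and deduce radial symmetry from rotational invariance of $B$.

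First I would pick $t_n\to\infty$ with $(u_1(\cdot,t_n),u_2(\cdot,t_n))\to(z,0)$ in $L^\infty(B)$. The parabolic regularity of Lemma~\ref{regularity:lemma}, together with standard interior and boundary Schauder estimates, makes the time-translated trajectories $(u_1,u_2)(\cdot,t_n+\cdot)$ precompact in $C^{2,1}_{\mathrm{loc}}(\overline{B}\times\R)$, so after passing to a diagonal subsequence they converge to a bounded classical global solution $(\tilde w_1,\tilde w_2)$ of a limiting system with $\tilde w_2(\cdot,0)\equiv 0$. Because $\tilde w_2\ge 0$ satisfies a parabolic equation that is linear in $\tilde w_2$ with bounded coefficients, uniqueness for the Cauchy problem gives $\tilde w_2\equiv 0$, and consequently $\tilde w_1$ is a bounded classical global solution of the autonomous scalar problem
\[
w_s-\Delta w=f_1(w)\text{ in }B\times\R,\qquad w=0\text{ on }\partial B\times\R,\qquad w(\cdot,0)=z.
\]
A standard diagonal argument yields $(\tilde w_1(\cdot,s),0)\in\omega(u)$ for every $s\in\R$, so the whole orbit lies inside the omega limit set.

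Next I would exploit the Lyapunov functional
\[
E(w)=\int_B\!\Bigl(\tfrac12|\nabla w|^2-F_1(w)\Bigr)dx,\qquad F_1(s):=\int_0^s f_1(\tau)\,d\tau,
\]
which satisfies $\tfrac{d}{ds}E(\tilde w_1(\cdot,s))=-\int_B((\tilde w_1)_s)^2\,dx\leq 0$. Since the orbit is bounded in $C^1$ and $E$ is nonincreasing and bounded, LaSalle's invariance principle applied to this bounded full orbit shows that its $\alpha$- and $\omega$-limits consist of steady states of $-\Delta w=f_1(w)$ with Dirichlet boundary condition. Strict concavity of $f_1$ together with $f_1(0)=0$ implies that $s\mapsto f_1(s)/s$ is strictly decreasing on $(0,\infty)$; combined with $\lim_{s\to\infty}f_1(s)=-\infty$ and $f'_1(0)>\lambda_1$, these are exactly the Brezis--Oswald hypotheses for existence and uniqueness of a positive solution $z^*$ of $-\Delta z=f_1(z)$, $z|_{\partial B}=0$. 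The nonnegative steady states are therefore exactly $\{0,z^*\}$.

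It remains to rule out $0$ as a limit of the orbit, and this is where the hypothesis $f'_i(0)>\lambda_1$ enters. A subsolution argument based on the Dirichlet principal eigenfunction $\varphi_1$ shows that whenever $(u_1,u_2)(\cdot,t)$ enters a sufficiently small $L^\infty$-neighborhood of $0$, the spectral gap $f'_1(0)-\lambda_1>0$ forces $u_1$ to grow exponentially while it remains small, so $\|u_1(\cdot,t)\|_\infty$ cannot tend to $0$ along any sequence and $(0,0)\notin\omega(u)$. Consequently $0\notin(\omega(u))_1$, both limits of the orbit equal $z^*$, $E$ is constant on the orbit, and thus $\tilde w_1\equiv z^*$; in particular $z=z^*$. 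Because $-\Delta z=f_1(z)$ with Dirichlet condition is invariant under rotations of $B$ and $z^*$ is unique, $z^*$ coincides with all its rotates and is therefore radially symmetric. The symmetric argument handles the case $(0,z)\in\omega(u)$ with $z\not\equiv 0$. The main technical obstacle I anticipate is precisely ruling out $(0,0)\in\omega(u)$ in the presence of the time-varying coupling $-\alpha_1(x,t)u_1u_2$: the naive subsolution $\varepsilon\varphi_1$ works only while $u_2$ stays small, and in a semi-trivial regime $u_2$ need not decay monotonically, so a careful lower bound for $\int_B u_1(\cdot,t)\varphi_1\,dx$ against an exponentially growing barrier, exploiting that the competition term can only depress $u_1$, is needed to obtain the required uniform positive lower bound.
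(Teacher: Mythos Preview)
The paper does not actually prove this theorem; it refers to \cite{saldana-phd} and says only that the argument uses the techniques of Haraux and Cazenave--Haraux for concave nonlinearities. Your proposal---limiting entire trajectory, energy Lyapunov functional, Brezis--Oswald uniqueness, and instability of the origin---is exactly in that spirit, so the overall strategy matches what the paper indicates.

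There are, however, two genuine gaps in your sketch. First, to run the Lyapunov argument on the \emph{full} orbit $s\in\R$ you need $\tilde w_2(\cdot,s)=0$ also for $s<0$, and ``uniqueness for the Cauchy problem'' gives this only for $s\ge 0$. You must invoke either parabolic backward uniqueness or, more in keeping with the paper's toolkit, the two-sided Harnack bound of Corollary~\ref{huska:polacik:safonov:corollary}: since $u_2>0$ solves $(u_2)_t-\Delta u_2=c\,u_2$ with $c=f_2(u_2)/u_2-\alpha_2 u_1\in L^\infty$, that corollary yields $\|u_2(\cdot,t_n+s)\|_\infty\le C(k)\,\|u_2(\cdot,t_n)\|_\infty\to 0$ for every $s\in[-k,k]$, hence $\tilde w_2\equiv 0$ on all of $\R$. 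Second, your line ``the competition term can only depress $u_1$'' is wrong here: the theorem makes \emph{no} sign assumption on $\alpha_i$ (the paper stresses this explicitly). The exclusion of $(0,0)$ from $\omega(u)$ must therefore use only that $|\alpha_i u_1u_2|$ is small when both $u_j$ are small, and it must be argued symmetrically in $i=1,2$, using \emph{both} hypotheses $f_i'(0)>\lambda_1$: on any interval where $\max_j\|u_j\|_\infty\le\epsilon$ one has $\tfrac{d}{dt}\int_B u_i\varphi_1\,dx\ge (f_i'(0)-\lambda_1-\|\alpha_i\|_\infty\epsilon)\int_B u_i\varphi_1\,dx$ for each $i$, and at the last entry time $s_n$ into this neighbourhood one of the two components satisfies $\|u_i(\cdot,s_n)\|_\infty=\epsilon$, so the Harnack lower bound~\eqref{huska:quotient:0} gives $\int_B u_i(\cdot,s_n)\varphi_1\,dx\ge c>0$; exponential growth on $[s_n,t_n]$ then contradicts $u_i(\cdot,t_n)\to 0$. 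Your version, which tracks only $u_1$, does not control $\int_B u_1\varphi_1\,dx$ at $s_n$ when it is $u_2$ that hits the boundary of the neighbourhood.
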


The proof relies on well-known techniques for nonlinear PDE's with concave nonlinearities as used in \cite{haraux} (see also \cite[Chapters 9,10]{cazenave-haraux}). The hypothesis of Theorem \ref{main:theorem:3} might look a bit restrictive, but they are satisfied by many interesting models; for example, the Lotka-Volterra models for two species. We state this next.

\begin{coro}\label{Lotka:Volterra:corollary} Let $u_1,u_2\in C^{2,1}(\overline{B}\times(0,\infty))\cap C(\overline{B}\times[0,\infty))\cap L^\infty(B\times(0,\infty))$ be nonnegative functions such that $u=(u_1,u_2)$ is a classical solution of
\begin{equation*}
\begin{aligned}
 (u_i)_t-\Delta u_i &= a_i u_i - b_i u_i^2 -\alpha_i(x,t)u_1u_2 &&\quad \text{ in }B\times(0,\infty),\\
 u_i&=0 &&\quad \text{ on }\partial B\times(0,\infty),
\end{aligned}
\end{equation*}
where $u(\cdot,0)\equiv u_{0,i}\in C(\overline{B})$ is not identically zero, $a_i>\lambda_1,$ $b_i>0,$ and $\alpha_i\in L^\infty(B\times(0,\infty))$ for $i=1,2.$  If $(z,0)\in \omega(u)$ then $z\in C(\overline{B})\cap H^1_0(B)$ is a positive radially symmetric function and it is the unique solution of $-\Delta z = a_1 z - b_1 z^2$ in $B.$  The analogous claim holds for $(0,z)\in\omega(u).$
\end{coro}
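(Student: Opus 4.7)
The plan is to derive this corollary as a direct application of Theorem \ref{main:theorem:3}, by verifying that the Lotka--Volterra nonlinearity $f_i(s) := a_i s - b_i s^2$ fits into the framework of that theorem. The system in the corollary has precisely the form required in Theorem \ref{main:theorem:3}, with the nonlinearities depending only on $u_i$ (not explicitly on $t$ or $x$), and with the same interaction terms $\alpha_i(x,t) u_1 u_2$. So the only work is to check each hypothesis on $f_i$ and then read off the conclusion.

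Concretely, I would verify the four conditions on $f_i \in C^1(\R)$: first, $f_i(s) = a_i s - b_i s^2$ is a polynomial, hence $C^\infty(\R)$; second, $f_i''(s) = -2b_i < 0$ for every $s$ since $b_i > 0$, so $f_i$ is strictly concave on $[0,\infty)$ (in fact on all of $\R$); third, $f_i(0) = 0$ is immediate; fourth, the leading term $-b_i s^2$ with $b_i > 0$ gives $\lim_{s\to\infty} f_i(s) = -\infty$. Finally, the eigenvalue condition is $f_i'(0) = a_i > \lambda_1$, which is exactly what is assumed in the statement of the corollary. Hence Theorem \ref{main:theorem:3} applies to $u = (u_1, u_2)$.

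The conclusion of Theorem \ref{main:theorem:3} then yields that any semi-trivial limit profile $(z,0) \in \omega(u)$ is radially symmetric and is the unique positive solution of $-\Delta z = f_1(z) = a_1 z - b_1 z^2$ in $B$ with $z = 0$ on $\partial B$. The regularity statement $z \in C(\overline{B}) \cap H_0^1(B)$ follows from standard elliptic regularity applied to this semilinear Dirichlet problem with bounded right-hand side $a_1 z - b_1 z^2$ (which is bounded because any $z \in \omega(u)$ is the uniform limit of $u_1(\cdot, t_n)$ with $u_1 \in L^\infty(B \times (0,\infty))$). Positivity of $z$ in $B$ comes from the strong maximum principle applied to $-\Delta z + b_1 z\, z = a_1 z$ together with the fact that $z \not\equiv 0$ (since $(z,0)$ is semi-trivial rather than fully trivial). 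The analogous case $(0,z) \in \omega(u)$ is identical with indices swapped.

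There is no serious obstacle here: the corollary is stated as a direct specialization of Theorem \ref{main:theorem:3}, and the only thing to do is match the hypotheses. The most delicate point, if any, is making sure that the semi-trivial profile $z$ obtained as a uniform limit actually has the elliptic regularity needed to be a classical (or at least $H_0^1$) solution of the limit equation; this is handled by bootstrapping in the time-independent elliptic equation $-\Delta z = a_1 z - b_1 z^2$ with $z \in C(\overline{B})$ and zero boundary data.
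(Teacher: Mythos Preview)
Your proposal is correct and matches the paper's approach: the paper presents Corollary~\ref{Lotka:Volterra:corollary} as an immediate specialization of Theorem~\ref{main:theorem:3} to the nonlinearity $f_i(s)=a_is-b_is^2$, and does not give a separate proof. Your extra remarks on regularity and positivity of $z$ are harmless but largely redundant, since these conclusions are already contained in the statement of Theorem~\ref{main:theorem:3}.
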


The Lotka-Volterra model for competing species assumes that the coefficients $\alpha_i$ are positive for $i=1,2,$ but this assumption is not needed for the claim in Corollary \ref{Lotka:Volterra:corollary}. Indeed, Theorem \ref{main:theorem:3} makes \emph{no} assumption on the sign of $\alpha_1$ and $\alpha_2,$ and therefore the result also applies to semitrivial limit profiles of cooperative, competitive, and mixed type systems, which includes, for instance, the predator-prey model.  Also note that the coefficients $\alpha_i$ may depend on $x$ and not only on $|x|$ as in Theorem \ref{main:theorem}.  Te keep this paper short, we do not give here the proof of Theorem \ref{main:theorem:3}. We refer to \cite{saldana-phd} for the proof and other related results.

We now give a brief outline of this paper. Section \ref{linear:equations} is devoted to the parabolic version of Serrin's boundary point lemma for supersolutions of scalar equations and its application to prove a perturbation lemma for solutions to weakly coupled linear systems. In Subsection \ref{subsec:homogeneous:linear:equation} we recollect some estimates from \cite{huska:polacik:safonov}, which will be used for the normalization argument.  In Section \ref{regularity} we explain how the equicontinuity up to second derivatives of any solution is achieved and finally Section \ref{proofs} contains the proof of Theorem \ref{main:theorem}.

To close this introduction, let us mention that, similarly as in \cite{saldana:weth:systems}, the proof of Theorem~\ref{main:theorem} can be adjusted to deal with other kinds of systems. For example, cubic systems, a cooperative version of \eqref{Lotka:Volterra:system}, or irreducible cooperative systems of $n-$equations can be considered. In this last type of systems, no normalization procedure is necessary and therefore also sign-changing solutions are allowed.  For the linearization of these systems one can use the Hadamard formulas similarly as in \cite{polacik:systems}. Then it is relatively easy to adjust the rotating plane method presented here to these problems. Since the statements and the proofs of these results are similar to those presented in \cite{saldana:weth:systems} and in \cite{saldana-phd} for the Neumann case, we do not give any further detail in this regard.  

\vspace{.2cm}

 \noindent \textbf{Acknowledgements:}
We thank Prof. Tobias Weth for very helpful discussions and suggestions related to the paper. This work was partially supported by  DAAD (Germany)-CONACyT (Mexico) and MIS F.4508.14 FNRS (Belgium).
\section{A parabolic version of Serrin's boundary point lemma}\label{linear:equations}

First we fix some notation. Let $B$ be a ball or an annulus in $\R^N$ centered at zero and fix $0 \le A_1 <A_2< \infty$ such that 
\begin{equation}\label{B:definition:dirichlet}
B:= \begin{cases}
\{x\in\mathbb R^N: A_1<|x|<A_2\}, & \text{ if } A_1>0,\\
 \{x\in\mathbb R^N: |x|<A_2\}, & \text{ if } A_1=0.
\end{cases}
\end{equation}
We denote by $B_r(y)\subset\mathbb R^N$ the ball of radius $r$ centered at $y\in\mathbb R^N.$  For a subset $A\subset \overline{B}$ and $\delta>0$ we define
\begin{align}\label{delta:notation}
[A]_{\delta}:=\{x\in \overline{B}\::\: \operatorname{dist}(x,A)\leq \delta\}.
\end{align}

For $\cI\subset \R,$ $e\in \Sn,$ and a function $v:\overline{B}\times \cI\to\R$ we define 
\begin{align*}
v^e:\overline{B}\times \cI\to\R\qquad \text{ by }\qquad v^e(x,t):=v(x,t)-v(\sigma_e(x),t). 
\end{align*}

Fix $I:=[0,1],$ $e_1=(1,0,\ldots,0)\in\R^N,$ and let $v:\overline{B}\times I\to\mathbb [0,\infty)$ be a function such that $v^{e_1}$ satisfies
\begin{equation}
\begin{aligned}\label{ve1:equation}
v^{e_1}_t-\Delta v^{e_1} - c v^{e_1} &\geq 0 \quad &&\text{ in } B({e_1}) \times I,\\
v^{e_1} &= 0 \quad &&\text{ on } \partial B({e_1}) \times I,\\
v^{e_1} &\geq 0 \quad &&\text{ in } B({e_1}) \times I,
\end{aligned}
\end{equation}
and the following holds.
\begin{enumerate}
 \item [$(H_{\alpha,\beta_0})$] There are $\alpha\in(0,1)$ and $\beta_0>0$ such that 
 \begin{align*}
 |v|_{2+\alpha;B\times I}+\|c\|_{L^\infty(B(e_1)\times I)}\leq \beta_0,
 \end{align*}
 where $|\cdot|_{2+\alpha;B\times I}$ denotes the parabolic H\"{o}lder norm, see \eqref{holder:norms}.
 \item [$(H_{k})$] There is $k>0$ such that $\|v^{e_1}\|_{L^\infty(B(e)\times(\frac{1}{7},\frac{4}{7}))} \geq k.$
\end{enumerate}

 \begin{remark}\label{delta:definition}
Let $v\in C^{2,1}(\overline{B}\times I)$ be a function satisfying $(H_{\alpha,\beta_0})$ and $(H_{k}).$  If $v^{e_1}\equiv 0$ on $\partial B(e)\times I,$ then there is $\delta\in(0,\frac{A_1-A_2}{2}),$ depending only on $\alpha,$ $\beta_0,$ $k,$ and $B$ such that $|v^{e_1}|<k$ in $[\partial B(e)]_\delta\times I.$  Therefore we have that $\|v^{e_1}\|_{L^\infty((B(e)\backslash[\partial B(e)]_\delta)\times(\frac{1}{7},\frac{4}{7}))} \geq k.$
\end{remark}

We will use the following version of the parabolic Harnack inequality for general domains as given in \cite{polacik}.
\begin{lemma}[Particular case of Lemma 3.5 in \cite{polacik}]\label{nonhomogeneous:harnack:inequality:polacik} 
Let $\beta_0>0,$ $\varepsilon>0,$ $\delta>0,$ $\theta>0,$ $0<\tau<\tau_1<\tau_2<\tau_3< \tau_4<\infty,$ be some given constants. There are positive constants $\kappa,$ $\kappa_1,$ and $p$ determined only by $B,$ $\beta_0,$ $\delta,$ $\varepsilon,$ $\tau_2-\tau_1,$ $\tau_3-\tau_2,$ $\tau_3-\tau_4,$ and $\theta$ with the following property.  If $D,U$ are subdomains in $B$ with $D\subset \subset U,$ $\operatorname{dist}(\overline{D}, \partial U)\geq \delta,$ $|D|>\varepsilon,$ and $v\in W^{2,1}_{N+1,loc}(U\times (\tau,T))\cap C(\overline{U}\times[\tau,\tau_4])$ is such that $v\geq 0$ in $U\times(\tau,\tau_4)$ and satisfies $v_t - \Delta v- c\,v\geq 0$ in $U\times(\tau,\tau_4),$ where $\tau_1-2\theta\leq \tau\leq \tau_1-\theta$  and $\|c\|_{L^\infty(U\times(\tau,\tau_4))}\leq\beta_0.$  Then
\begin{align*}
 \inf_{D\times (\tau_3,\tau_4)}v \geq \kappa \bigg( \frac{1}{|D\times(\tau_1,\tau_2)|}\int_{D\times(\tau_1,\tau_2)} v^p\ d(x,t) \bigg)^\frac{1}{p}.
\end{align*}
\end{lemma}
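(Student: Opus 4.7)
The plan is to derive this weak parabolic Harnack estimate by combining the classical local Krylov--Safonov weak Harnack inequality on small parabolic cylinders with a chaining argument that propagates the estimate across $D$ and forward in time to $(\tau_3,\tau_4)$.

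First I would fix a length scale $r:=\delta/4$, so that every Euclidean ball $B_r(x_0)$ with $x_0\in\overline D$ lies in $U$ at distance at least $\delta/2$ from $\partial U$, and every parabolic cylinder of radius $r$ based at such a point fits in $U\times(\tau,\tau_4)$ (using the quantitative lower bounds on the time gaps and on $\theta$). On each such cylinder $Q=B_r(x_0)\times(s,s+r^2)$, the standard interior weak parabolic Harnack inequality for nonnegative supersolutions of $v_t-\Delta v-cv\ge 0$ with $\|c\|_{L^\infty}\le\beta_0$ yields exponents $p>0$ and constants $C>0$, depending only on $N$, $r$ and $\beta_0$, such that
\[
\Bigl(\frac{1}{|Q^-|}\int_{Q^-}v^p\,d(x,t)\Bigr)^{1/p}\le C\,\inf_{Q^+}v
\]
for the usual past/future sub-cylinders $Q^\pm$ of $Q$. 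This is the local building block; the zeroth-order term $c$ is handled directly inside this estimate, so no preliminary exponential rescaling is needed.

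Second, I would execute a two-step propagation. \emph{Step (a)}: cover $D$ by finitely many balls $B_r(x_i)$, $i=1,\dots,M$, with $M$ controlled by $|B|$ and $\delta$. By subadditivity some index $i^\ast$ satisfies $\int_{B_r(x_{i^\ast})\times(\tau_1,\tau_2)}v^p\ge M^{-1}\int_{D\times(\tau_1,\tau_2)}v^p$, and one application of the local inequality on a cylinder based at $x_{i^\ast}$ with time-base in $(\tau_1,\tau_2)$, using the lookback window guaranteed by $\tau\le\tau_1-\theta$, produces a pointwise lower bound on $v$ at a single spacetime point $(y_0,s_0)\in U\times[\tau_1,\tau_2]$ of the desired form. \emph{Step (b)}: propagate this pointwise bound to every $(y,t)\in D\times(\tau_3,\tau_4)$ through a finite sequence of parabolic cylinders of radius $\sim r$, each link costing one application of the full parabolic Harnack inequality; the number of spatial links is bounded by $\operatorname{diam}(B)/r$, the number of temporal links by $(\tau_4-\tau_1)/r^2$, so the total multiplicative loss is a computable function of the parameters listed in the statement.

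The main obstacle is the quantitative bookkeeping in the chaining step: one must verify that the spatial chain can always be carried out inside $U$ (so that the local Harnack applies at each link), and that the final constant $\kappa$ depends only on $B,\beta_0,\delta,\varepsilon$, the three time gaps and $\theta$, rather than on the specific shape of $D$ or on $v$. Path-connectedness of $B$ provides the route for the spatial chain, while the measure bound $|D|>\varepsilon$ controls the covering number $M$; the exponent $p$ is inherited from the local Krylov--Safonov estimate and $\kappa$ emerges as the product of the local constants along a chain whose length is bounded a priori by the stated data.
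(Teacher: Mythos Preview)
The paper does not prove this lemma: immediately after the statement it simply writes ``Indeed, this follows directly by \cite[Lemma~3.5]{polacik} taking $v\geq 0$ and $g\geq 0$,'' treating the result as a black-box citation of a known estimate. Your proposal instead sketches the standard underlying argument (local Krylov--Safonov weak Harnack inequality on small cylinders plus a Harnack chain), which is exactly how such statements are proved in the sources and is in principle the right route.

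There is, however, a genuine gap in your chaining step. You bound the number of spatial links by $\operatorname{diam}(B)/r$ and justify this by ``path-connectedness of $B$.'' But the local Harnack inequality is only available on cylinders contained in $U\times(\tau,\tau_4)$, so the chain must remain inside $\{x\in U:\operatorname{dist}(x,\partial U)\gtrsim r\}$, not merely inside $B$. For an arbitrary subdomain $U\subset B$ the length of such a chain is governed by the \emph{inner} diameter of this set, which is not controlled by $\operatorname{diam}(B)$, $\delta$, and $\varepsilon$ alone. In every application the paper actually makes of the lemma one has $U=B(e_1)$, a half-ball or half-annulus, where the inner diameter is comparable to $\operatorname{diam}(B)$ and your bound is harmless; and Pol\'a\v{c}ik's original Lemma~3.5 is stated with hypotheses that make the constants uniform. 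But as your sketch stands you have not explained why $\kappa$ is independent of the particular shapes of $U$ and $D$: you should either invoke Pol\'a\v{c}ik's lemma directly (as the paper does), or add an explicit uniform interior-chain condition on $U$, or note that in the intended applications $U$ is always of a fixed simple geometric type.
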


Indeed, this follows directly by \cite[Lemma 3.5]{polacik} taking $v\geq 0$ and $g\geq 0.$
 
The following lemma focuses on points of the boundary $\partial B(e_1)$ which are \emph{not} corner points.

\begin{lemma}\label{estimate1}
Let $v\in C^{2,1}(\overline{B}\times I)$ be a function such that $v^{e_1}$ satisfies \eqref{ve1:equation} and such that assumptions $(H_{\alpha,\beta_0})$ and $(H_{k})$ hold. Then given $\delta\in(0,\frac{A_1-A_2}{2}),$ there are positive constants $\varepsilon$ and $\mu$ depending only on $\delta,$ $\alpha,$ $\beta_0,$ $k,$ and $B$ such that
\begin{align}\label{estimate1:2}
v^{e_1}(x,t)\geq \mu x_1\qquad \text{ for all }x\in B(e_1)\backslash [\partial B]_\delta,\ t\in [\,\frac{6}{7}\,,\,1\,]
\end{align}
and 
\begin{align}\label{estimate1:1}
 \frac{\partial v^{e_1}}{\partial \nu}>\varepsilon\qquad \text{ in }\ \ (\partial B(e_1)\backslash[\partial B\cap H(e_1)]_\delta)\times[\,\frac{6}{7}\,,\,1\,].
\end{align}
Here $\nu$ is the inwards unit normal vector field on $\partial B(e_1).$
\end{lemma}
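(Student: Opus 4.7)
The plan is to combine the parabolic Harnack inequality (Lemma~\ref{nonhomogeneous:harnack:inequality:polacik}) with a parabolic Hopf boundary lemma, and then to convert the boundary normal-derivative bound into the linear interior estimate \eqref{estimate1:2} by integration along $e_1$ together with the Hölder control on $\nabla v^{e_1}$.

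Step one is a uniform interior Harnack bound. By $(H_k)$ and Remark~\ref{delta:definition}, there exist $\delta_0>0$ and a point $(x_0,t_0)\in(B(e_1)\setminus[\partial B(e_1)]_{\delta_0})\times(1/7,4/7)$ with $v^{e_1}(x_0,t_0)\ge k$, where $\delta_0$ depends only on $\alpha,\beta_0,k,B$. The Hölder bound $|v|_{2+\alpha;B\times I}\le\beta_0$ yields a fixed radius $r_1=r_1(\alpha,\beta_0,k)$ on which $v^{e_1}\ge k/2$. Choosing $\rho_0:=\min\{\delta/4,\delta_0,r_1/2\}$ and
\begin{align*}
K:=\{x\in B(e_1):\mathrm{dist}(x,\partial B(e_1))\ge\rho_0\},
\end{align*}
any point of $K$ can be joined to $x_0$ by a uniformly bounded chain of parabolic cylinders of fixed geometry lying inside $B(e_1)$; iterated application of Lemma~\ref{nonhomogeneous:harnack:inequality:polacik} along this chain yields $\mu_0=\mu_0(\delta,\alpha,\beta_0,k,B)>0$ with
\begin{align*}
v^{e_1}(x,t)\ge\mu_0\qquad\text{for all }(x,t)\in K\times[5/7,1].
\end{align*}

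Step two handles \eqref{estimate1:1} via a Hopf construction. For each $x^*\in\partial B(e_1)\setminus[\partial B\cap H(e_1)]_{\delta/2}$ there is an interior tangent ball $B_{\rho_0}(y^*)\subset B(e_1)$ at $x^*$ with $y^*=x^*+\rho_0\nu(x^*)$ and $\rho_0=\rho_0(\delta,B)$ uniform (shrinking $\rho_0$ above if necessary); by construction $B_{\rho_0/2}(y^*)\subset K$. Since $v^{e_1}$ is a nonnegative supersolution of $\mathcal{L}w:=w_t-\Delta w-cw$ with $\|c\|_{L^\infty}\le\beta_0$, vanishes at $(x^*,t)$, and satisfies $v^{e_1}\ge\mu_0$ on $B_{\rho_0/2}(y^*)\times[5/7,1]$, the classical parabolic Hopf barrier
\begin{align*}
\psi(x,t):=m_0\bigl(1-e^{-\lambda(t-5/7)}\bigr)\bigl(e^{-\gamma|x-y^*|^2}-e^{-\gamma\rho_0^2}\bigr),
\end{align*}
on the parabolic annulus $(B_{\rho_0}(y^*)\setminus\overline{B_{\rho_0/2}(y^*)})\times[5/7,1]$, is a subsolution of $\mathcal{L}$ for $\gamma,\lambda$ sufficiently large depending only on $\beta_0,\rho_0$, and satisfies $\psi\le v^{e_1}$ on the parabolic boundary provided $m_0$ is small in terms of $\mu_0,\gamma,\rho_0$. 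Comparison followed by differentiation at $x^*$ gives $\partial v^{e_1}/\partial\nu(x^*,t)\ge\varepsilon$ for $t\in[6/7,1]$ with $\varepsilon>0$ depending only on the admissible parameters, which is \eqref{estimate1:1}.

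Step three deduces \eqref{estimate1:2}. For $x\in B(e_1)\setminus[\partial B]_\delta$ set $x_H:=x-x_1e_1\in H(e_1)$; since $\mathrm{dist}(x,\partial B)>\delta$, one has $\mathrm{dist}(x_H,\partial B)>\delta-x_1$, so $x_H\in H(e_1)\setminus[\partial B]_{\delta/2}$ whenever $x_1\le\delta/2$. For such $x$, step two applied at $x_H$ and the Hölder bound on $\partial_{e_1}v^{e_1}$ from $(H_{\alpha,\beta_0})$ give $\partial_{e_1}v^{e_1}(x_H+se_1,t)\ge\varepsilon-Cs^\alpha\ge\varepsilon/2$ for $s\le\rho_1:=(\varepsilon/(2C))^{1/\alpha}$; integrating in $s$ from $0$ to $x_1$ and using $v^{e_1}(x_H,t)=0$ yields $v^{e_1}(x,t)\ge(\varepsilon/2)x_1$ when $0<x_1\le\min\{\rho_1,\delta/2\}$. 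For the complementary range of $x_1$, choosing $\rho_0\le\min\{\rho_1,\delta/2\}$ ensures $x\in K$, and the step-one bound combined with $x_1\le A_2$ gives $v^{e_1}(x,t)\ge\mu_0\ge(\mu_0/A_2)x_1$. Setting $\mu:=\min\{\varepsilon/2,\mu_0/A_2\}$ finishes \eqref{estimate1:2}. The principal obstacle is ensuring quantitative uniformity: every constant in the chain—$r_1$, the Harnack length, the Harnack constant $\kappa$, the tangent-ball radius $\rho_0$, and the barrier parameters $\gamma,\lambda,m_0$—must depend only on $\alpha,\beta_0,k,\delta,B$, not on $v$. This is precisely the feature missing from \cite{alessandri}, and is achieved here by using $(H_{\alpha,\beta_0})$ to convert the $L^\infty$--seed $(H_k)$ into a pointwise bound on a parabolic ball of definite size, and the uniform parabolic Harnack inequality of \cite{polacik} together with explicit barriers whose parameters can be tracked.
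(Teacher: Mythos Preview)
Your overall strategy—interior Harnack, Hopf barrier for the normal derivative, then integration along $e_1$ combined with a second interior bound for the linear estimate—is exactly the paper's. Two technical points need repair.

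First, the barrier $\psi(x,t)=m_0\bigl(1-e^{-\lambda(t-5/7)}\bigr)\bigl(e^{-\gamma|x-y^*|^2}-e^{-\gamma\rho_0^2}\bigr)$ is \emph{not} a subsolution of $\mathcal{L}$ near $t=5/7$: writing $\psi=m_0\,g(t)\,h(x)$ one has $\mathcal{L}\psi=m_0\bigl[g'h-g(\Delta h+ch)\bigr]$, and as $t\downarrow 5/7$ the term $g'h\sim\lambda h>0$ dominates while $g(\Delta h+ch)\to 0$, so $\mathcal{L}\psi>0$ however large $\gamma,\lambda$ are taken. The paper circumvents this by working on a \emph{space--time} tangent ball rather than a spatial annulus: with $z(x,t)=\bigl(e^{-\gamma(|x-y|^2+(t-t^*)^2)}-e^{-\gamma r^2}\bigr)e^{-\beta_0 t}$ on $D=B_r(y,t^*)\cap B_{r/2}(x^*,t^*)\cap(B(e_1)\times[0,t^*])$, the barrier vanishes identically on the outer space--time sphere $\Gamma_1$, so there is no bottom face on which one must compare, and the subsolution check is a single line.

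Second, in Step~3 you require $\rho_0\le\min\{\rho_1,\delta/2\}$, but $\rho_1=(\varepsilon/(2C))^{1/\alpha}$ depends on $\varepsilon$, which was produced in Step~2 from $\mu_0$, which in turn depends on the $\rho_0$ chosen in Step~1—this is circular. The paper avoids it by applying Lemma~\ref{nonhomogeneous:harnack:inequality:polacik} a \emph{second} time at the end, with a fresh distance parameter $\delta_2$ determined only after $\varepsilon$ has been fixed, rather than going back to shrink the original $\rho_0$.
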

\begin{proof}
 By Remark \ref{delta:definition} there is $\delta_1\in(0,\delta)$ depending only on $\alpha,$ $\beta_0,$ $k,$ $\delta,$ and $B,$ such that 
 \begin{align}\label{delta:definition:eq:1}
\|v^{e_1}\|_{L^\infty((B(e)\backslash[\partial B(e)]_{\delta_1})\times(\frac{1}{7},\frac{4}{7}))} \geq k.  
\end{align}
Let $r< \min\{\delta_1,\frac{1}{7}\}$ be such that for any $t^*\in[\frac{6}{7},1]$ and $x^*\in\partial B(e_1)$ with $\operatorname{dist}(x^*,H(e_1)\cap \partial B)>\delta_1$ we have that $x^*+r \nu(x^*)=:y\in B(e_1)$ and $\partial B_{r}(y,t^*)\cap (\partial B(e_1)\times[0,t^*])=\{(x^*,t^*)\}.$  Fix such a pair $(x^*,t^*)$ and define the sets
 \begin{align*}
  D&:= B_{r}(y,t^*)\cap B_{\frac{{r}}{2}}(x^*,t^*)\cap (B(e_1)\times [0,t^*]),\\
\Gamma_1&:= \partial B_{r}(y,t^*)\cap \overline{B_{\frac{{r}}{2}}(x^*,t^*)}\cap (\overline{B(e_1)}\times [0,t^*]),\\
\Gamma_2&:= \overline{B_{r}(y,t^*)}\cap \partial{B_{\frac{{r}}{2}}(x^*,t^*)}\cap (\overline{B(e_1)}\times [0,t^*]),
 \end{align*}
and the function $z:D\to [0,1]$ given by
\begin{align*}
z(x,t):= (e^{-\gamma(|x-y|^2+(t-t^*)^2)}-e^{-\gamma {r}^2})e^{-\beta_0t} \quad \text{ with } \gamma=\frac{2(N+1)}{{r}^2}.
\end{align*}

An easy calculation shows that
\begin{align}
&z_t-\Delta z -cz \nonumber\\
&=e^{-\gamma (|x-y|^2+(t-t^*)^2)-\beta_0t}2\gamma(-2\gamma |x-y|^2+N-(t-t^*))+(-\beta_0-c)z\nonumber\\
&\leq e^{-\gamma (|x-y|^2+(t-t^*)^2)-\beta_0t}\gamma(-\gamma{r}^2+2(N+1))= 0\qquad \text{in $D.$}\label{z:subsolution}
\end{align}

Let $0<d<\min\{\operatorname{dist}(\Gamma_2,\partial B(e_1)\times[\frac{5}{7},1])\ ,\ \delta_1\}$ and $K:=\{x\in B(e_1)\::\: \operatorname{dist}(x,\partial B(e_1))\geq d\}.$  Note that $\Gamma_2\subset K\times[\frac{5}{7},1]$ and $K$ only depends on $\alpha,$ $\beta_0,$ $k,$ $\delta,$ and $B.$ Then by $(H_{\alpha,\beta_0}),$ \eqref{delta:definition:eq:1}, and Lemma \ref{nonhomogeneous:harnack:inequality:polacik}, there is $\mu_1=\mu_1(\alpha,\beta_0,\delta,k,B)>0$ such that $v^{e_1}\geq \mu_1$ in $K\times[\frac{5}{7},1].$  In particular, we have that $v^{e_1}\geq \mu_1$ in $\Gamma_2.$   Let $w:D\to\R$ be given by $w:=v^{e_1}-\mu_1 z.$ Since $z\equiv 0$ in $\Gamma_1$ and $v^{e_1}\geq0$ in $B(e_1)$ we have that $w\geq 0$ in $\Gamma_1$ and, since $z\leq 1$ in $\Gamma_2,$ it follows that $w \geq 0$ on $\Gamma_1\cup\Gamma_2.$ Further, by \eqref{ve1:equation} and \eqref{z:subsolution} we get that $ w_t-\Delta w -cw \geq 0$ in $D.$  Thus the parabolic maximum principle implies that $w\geq 0$ in $D.$ Since $w(x^*,t^*)=0$ we have that $\frac{\partial w(x^*,t^*)}{\partial \nu}\geq 0$ and thus
 \begin{align*}
  \frac{\partial v^{e_1}}{\partial \nu}(x^*,t^*)\geq \mu_1\frac{\partial z}{\partial \nu}(x^*,t^*)\geq 2e^{-\gamma{r}^2-\beta_0}\mu_1\gamma{r}=:\varepsilon>0.
 \end{align*}
Since $\delta>\delta_1,$ this proves \eqref{estimate1:1}. Moreover, since $\frac{\partial v^{e_1}}{\partial e_1}\geq\varepsilon$ in $(H(e_1)\backslash [\partial B]_{\delta_1})\times[\frac{6}{7},1],$ there is, by $(H_{\alpha,\beta_0}),$ some $\delta_2=\delta_2(\alpha,\beta_0,\delta,k,B)\in(0,\delta_1)$ such that 
  \begin{align}\label{estimate1:3}
   \frac{\partial v^{e_1}}{\partial e_1}\geq\frac{\varepsilon}{2} \qquad \text{ in }\quad  ([H(e_1)\backslash [\partial B]_{\delta_1}]_{\delta_2})\times[\frac{6}{7},1].
  \end{align}
  
On the other hand, $B(e_1)\backslash [\partial B(e_1)]_{\delta_1}\subset B(e_1)\backslash[\partial B(e_1)]_{\delta_2}$ since $\delta_2<\delta_1.$ Thus, by $(H_{\alpha,\beta_0}),$ \eqref{delta:definition:eq:1}, and Lemma \ref{nonhomogeneous:harnack:inequality:polacik}, there is $\mu_2>0$ depending only on $\alpha,$ $\beta_0,$ $\delta,$ $k,$ and $B$ such that $v^{e_1}\geq \mu_2$ in $(B(e_1)\backslash[\partial B(e_1)]_{\delta_2})\times[\frac{5}{7},1].$ This, together with \eqref{estimate1:3} easily implies \eqref{estimate1:2} for some $\mu>0$ depending only on $\alpha,$ $\beta_0,$ $\delta,$ $k,$ and $B.$
 \end{proof}

We now turn our attention to the \emph{corner points} on the boundary $\partial B(e_1)$.
 
\begin{lemma}\label{estimate2}
Let $v\in C^{2,1}(\overline{B}\times I)$ be a function such that $v^{e_1}$ satisfies \eqref{ve1:equation} and such that assumptions $(H_{\alpha,\beta_0})$ and $(H_{k})$ hold. Then there is $\varepsilon>0$ depending only on $\alpha,$ $\beta_0,$ $k,$ and $B$ such that
\begin{align}\label{estimate2:estimate}
 \frac{\partial^2 v^{e_1}(x,1)}{\partial s^2}>\varepsilon \qquad  \text{ and }\qquad  \frac{\partial^2 v^{e_1}(x,1)}{\partial \tilde s^2}<-\varepsilon
\end{align}
 for all $x\in\partial B\cap H(e_1)$, where $s=\frac{1}{\sqrt{2}}(\nu+e_1)\in\Sn,$ $\tilde s=\frac{1}{\sqrt{2}}(-\nu+e_1)\in\Sn,$  and $\nu$ is the inwards unit normal vector field on $\partial B.$
 \end{lemma}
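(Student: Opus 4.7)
The proof is a parabolic corner analog of Hopf's boundary point lemma, with constants tracked to depend only on $\alpha,\beta_0,k,B$. Before constructing any barrier, I would reduce the statement algebraically. Since $v^{e_1}\in C^{2+\alpha,1+\alpha/2}$ vanishes on both $\partial B$ (Dirichlet condition in \eqref{ve1:equation}) and on $H(e_1)$ (antisymmetry), Taylor expansion at a corner $x_0\in\partial B\cap H(e_1)$ along each face forces every pure and tangential second derivative of $v^{e_1}(\cdot,1)$ at $x_0$ to vanish; only the mixed derivative $C:=\partial_\nu\partial_{e_1}v^{e_1}(x_0,1)$ can be nonzero. Since $\nu(x_0)$ is radial and $x_0\cdot e_1=0$, we have $\nu(x_0)\perp e_1$, so with $s=\frac{1}{\sqrt 2}(\nu+e_1)$ and $\tilde s=\frac{1}{\sqrt 2}(-\nu+e_1)$ the identities $\partial_s^2 v^{e_1}(x_0,1)=C$ and $\partial_{\tilde s}^2 v^{e_1}(x_0,1)=-C$ follow by direct computation. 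Both asserted estimates thus reduce to producing $\varepsilon>0$, depending only on $\alpha,\beta_0,k,B$, with $C\geq\varepsilon$ for every $x_0$.

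To produce $\varepsilon$, I would build an explicit local subsolution of the parabolic operator $L:=\partial_t-\Delta-c$ and apply the parabolic maximum principle. Assume $x_0$ lies on the outer sphere $\{|x|=A_2\}$ (the inner case for an annulus is symmetric) and set $r(x):=A_2-|x|$. Consider
\[
Z(x,t):=\mu'\,x_1\,\bigl(e^{a\,r(x)}-1\bigr)\,\chi(|x-x_0|)\,g(t)
\]
on the parabolic cylinder $\Omega:=\{x\in B(e_1):r(x)<\delta,\,|x-x_0|<\rho\}\times[\tau_0,1]$, where $\chi$ is a smooth cutoff supported in $[0,\rho)$ with $\chi\equiv 1$ on $[0,\rho/2]$, and $g(t):=e^{-M(1-t)}$ satisfies $g(1)=1$ and $g'/g\equiv M$. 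The positive parameters $\delta,\rho,a,M,\mu',\tau_0\in[6/7,1)$ are chosen in this order, depending only on $\alpha,\beta_0,k,B$. The factor $x_1(e^{ar}-1)$ vanishes on $\partial B$ (through $r=0$) and on $H(e_1)$ (through $x_1=0$), and $\partial_\nu\partial_{e_1}Z(x_0,1)=\mu' a>0$.

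For the subsolution property, the identity $\Delta(x_1(e^{ar}-1))=x_1 a e^{ar}(a-\tfrac{N+1}{|x|})$ shows that, for $a$ sufficiently large relative to $M,\beta_0,A_2$ and the fixed $C^2$-norms of $\chi$, we have $LZ\leq 0$ in $\Omega$ (the cutoff errors and the $g'/g$ contribution are absorbed by the leading $x_1 a^2 e^{ar}$ term, and $\mu'$ is taken small to dispose of the cutoff-annulus terms). On the parabolic boundary of $\Omega$, the faces $\{r=0\}$, $\{x_1=0\}$, and $\{|x-x_0|=\rho\}$ give $Z=0\leq v^{e_1}$ trivially. On the inner ring $\{r=\delta\}\cap\overline\Omega$, Lemma \ref{estimate1} provides $v^{e_1}\geq\mu x_1$ for $t\in[6/7,1]$, so taking $\mu'(e^{a\delta}-1)\leq\mu$ yields $Z\leq v^{e_1}$ there. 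The parabolic maximum principle then gives $v^{e_1}\geq Z$ in $\Omega$; since $v^{e_1}-Z\geq 0$ vanishes together with its first $\nu$- and $e_1$-derivatives at $(x_0,1)$, the inequality passes to mixed second derivatives, forcing $C=\partial_\nu\partial_{e_1}v^{e_1}(x_0,1)\geq\partial_\nu\partial_{e_1}Z(x_0,1)=\mu' a$, and we set $\varepsilon:=\mu' a$.

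The main difficulty I foresee is the comparison on the initial face $\{t=\tau_0\}$: Lemma \ref{estimate1} provides a lower bound for $v^{e_1}$ only away from $\partial B$, whereas $\Omega\cap\{t=\tau_0\}$ reaches up to $\partial B$. I would handle this via a preliminary off-corner Hopf argument combining Lemma \ref{estimate1} with the $C^{2+\alpha}$ regularity of $v^{e_1}$ to get a uniform estimate $v^{e_1}(x,t)\geq c_1 x_1 r(x)$ in a neighborhood of $\partial B$ but at positive distance from the corner ring $\partial B\cap H(e_1)$, and then absorb the missing near-corner strip by the smallness of $g(\tau_0)=e^{-M(1-\tau_0)}$ (taking $M$ large). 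A secondary delicate point is the exponential profile $e^{ar}-1$: the linear model $Z=x_1 r$ is harmonic in the flat right-angle wedge but becomes a supersolution once curvature of $\partial B$ is accounted for (since $\Delta(x_1(A_2-|x|))=-(N+1)x_1/|x|$), so the term $a^2 e^{ar}$ coming from $f''(r)$ is essential to overcome the curvature contribution and make $Z$ a genuine subsolution.
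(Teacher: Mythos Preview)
Your reduction to the mixed derivative $C=\partial_\nu\partial_{e_1}v^{e_1}(x_0,1)$ is correct and tidier than what the paper does explicitly: the paper instead verifies $\partial_s\psi(x^*,1)=0$ directly from $v^{e_1}\equiv0$ on $\partial B(e_1)$ and then derives the $\tilde s$ estimate from the same antisymmetry you use. The overall strategy---compare $v^{e_1}$ against a barrier of the form $x_1\cdot(\text{profile vanishing on }\partial B)$ and read off the corner second derivative---is the same.

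The substantive difference, and the place where your proposal has a real gap, is the comparison domain. The paper works on the intersection of two \emph{space-time} balls centered at $t=1$, namely $U=B_r(y,1)\cap B_{r/2}(x^*,1)\cap(B(e_1)\times I)$ with $y=x^*+r\nu(x^*)\in H(e_1)\cap B$ and $r<\min\{\tfrac17,\tfrac{A_2-A_1}{4}\}$, using the barrier
\[
\varphi(x,t)=x_1\bigl(e^{-\theta(|x-y|^2+(t-1)^2)}-e^{-\theta r^2}\bigr)e^{-\beta_0 t},\qquad \theta=\tfrac{2(N+3)}{r^2}.
\]
Because $r<\tfrac17$, the set $U$ never reaches $\{t=0\}$, and because $|x^*-y|=r$ the closure of $U$ meets $\partial B$ only at $(x^*,1)$. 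Hence the parabolic boundary of $U$ is exactly $\Lambda_1$ (the outer sphere, where $\varphi\equiv0$), $\Lambda_2$ (the inner sphere, which lies at a fixed positive distance from $\partial B$, so Lemma~\ref{estimate1} supplies $v^{e_1}\geq\mu x_1\geq\mu\varphi$), and $\Lambda_3\subset H(e_1)\times I$ (where $\varphi\equiv0$ via the factor $x_1$). There is no initial face at all.

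Your cylinder $\Omega\times[\tau_0,1]$ does have an initial face, and the fix you sketch does not close. In the near-corner strip at $t=\tau_0$ you only have $v^{e_1}\geq0$, whereas $Z(\cdot,\tau_0)=\mu'x_1(e^{ar}-1)\chi\,g(\tau_0)$ is strictly positive in the interior of $B(e_1)$; shrinking $g(\tau_0)$ scales $Z$ down but cannot force $v^{e_1}\geq Z$ pointwise from mere nonnegativity. A quantitative bound $v^{e_1}(x,\tau_0)\gtrsim x_1\,r(x)$ valid \emph{up to the corner} with constants depending only on $\alpha,\beta_0,k,B$ is precisely the corner estimate you are trying to prove, so invoking it at $\tau_0$ would be circular. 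The natural repair is to let the spatial cross-section shrink to the corner as $t$ decreases---which is exactly what the space-time ball accomplishes. With that change your exponential-in-$r$ barrier would also work, but the paper's Gaussian choice makes the subsolution computation a one-line inequality and eliminates the cutoff $\chi$ altogether.
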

\begin{proof}
 Let $x^*\in \partial B\cap H(e_1)$ and $0<r<\min\{\frac{1}{7}, \frac{A_2-A_1}{4}\},$ see \eqref{B:definition:dirichlet}. Then $x^*+r \nu(x^*)=:y\in H(e_1)\cap B$ and $\partial B_{r}({y},1)\cap (\partial{B}\times I) =\{(x^*,1)\}.$  Define
 \begin{align*}
 U&:= B_{r}({y},1)\cap B_{\frac{r}{2}}(x^*,1)\cap (B(e_1)\times I),\\
\Lambda_1&:= \partial B_{r}({y},1)\cap \overline{B_{\frac{r}{2}}(x^*,1)}\cap (\overline{B(e_1)}\times I),\\
\Lambda_2&:= \overline{B_{r}({y},1)}\cap \partial{B_{\frac{r}{2}}(x^*,1)}\cap (\overline{B(e_1)}\times I),\\
\Lambda_3&:= \overline{B_{r}({y},1)}\cap \overline{B_{\frac{r}{2}}(x^*,1)}\cap ({H(e_1)}\times I),
 \end{align*}
and the function $\varphi:\overline{B}\times I\to [-A_2,A_2]$ given by 
\begin{align*}
\varphi(x,t):= x_1(e^{-\theta(|x-{y}|^2+(t-1)^2)}-e^{-\theta{r}^2})e^{-\beta_0t}\quad \text{ with }\theta=\frac{2(N+3)}{r^2}.
\end{align*}

A direct calculation shows that
\begin{align}
&\varphi_t-\Delta \varphi -c\varphi \nonumber\\
&=e^{-\theta (|(x,t)-(y,1)|^2)-\beta_0t}2\theta x_1(-2\theta |x-y|^2+N+2-t)-(\beta_0+c)\varphi\nonumber\\
&\leq e^{-\theta (|(x,t)-(y,1)|^2)-\beta_0t}2\theta x_1(-2\theta (\frac{r}{2})^2+N+3)= 0\qquad \text{in $U.$  }\label{subsolution:1}
\end{align}

Since $\operatorname{dist}(\Lambda_2,\partial B\times I)>0,$ there is $\delta_1=\delta_1(B)>0$ such that $\Lambda_2\ \subset \ (\overline{B(e_1)}\backslash [\partial B]_{\delta_1})\times[\,\frac{6}{7}\,,\,1\,].$  Then, by Lemma \ref{estimate1} with $\delta=\delta_1$, there is $\mu>0$ depending only on $\alpha,$ $\beta_0,$ $k,$ and $B$ such that $v^{e_1}(x,t)\geq \mu x_1$ for $x\in (\overline{B(e_1)}\backslash [\partial B]_{\delta_1})$ and $t\in[\,\frac{6}{7}\,,\,1\,].$  In particular $v^{e_1}(x,t)\geq \mu x_1$ for $(x,t)\in\Lambda_2.$ Define the function $\psi:\overline{B}\times I\to\R$ by $\psi(x,t):=v^{e_1}(x,t)-\mu\varphi(x,t).$  Then $\psi \geq 0$ on $\Lambda_2,$ because $\varphi(x,t)\leq x_1$ for $(x,t)\in\Lambda_2$.  Moreover, since $v^{e_1}\geq 0$ in $B(e_1)\times I$ and $\varphi\equiv 0$ in $\Lambda_1\cup \Lambda_3$ by definition, we get that $\psi \geq 0$ on $\Lambda_1\cup \Lambda_2\cup \Lambda_3.$ Further, by \eqref{ve1:equation} and \eqref{subsolution:1}, $\psi_t-\Delta \psi -c\psi\geq 0$ in $U.$ Thus the parabolic maximum principle implies that $\psi\geq 0$ in $U.$  Now, remember that $s(x^*)=\frac{1}{\sqrt{2}}(\nu(x^*)+e_1)$. By direct calculation $\frac{\partial \varphi}{\partial s}(x^*,1)=0$ and, since $v^{e_1}\equiv 0$ on $\partial B(e_1)\times I,$ we have that 
\begin{align*}
\frac{\partial v^{e_1}}{\partial s}(x^*,1)=\frac{1}{\sqrt{2}}(\frac{\partial v^{e_1}}{\partial e_1}(x^*,1)+\frac{\partial v^{e_1}}{\partial \nu}(x^*,1))=0.
\end{align*}
This implies that $\frac{\partial \psi}{\partial s}(x^*,1)=0=\psi(x^*,1).$ Since $\psi\geq 0$ in $U,$ it follows that  $\frac{\partial^2 \psi}{\partial s^2}(x^*,1)\geq 0,$ and therefore
\begin{align*}
 \frac{\partial^2 v^{e_1}}{\partial s^2}(x^*,1)\geq \mu\frac{\partial^2 \varphi}{\partial s^2}(x^*,1)&=-4\mu e^{-\theta r^2-\beta_0}\theta\frac{1}{\sqrt{2}}(x-{y})\cdot s=2\mu e^{-\theta r^2-\beta_0}\theta r,
  \end{align*}
 which yields the first inequality in \eqref{estimate2:estimate} with $\varepsilon:=2\mu e^{-\theta r^2}\theta r>0$.  For the second inequality, note that the function $v^{e_1}$ is antisymmetric in $x$ with respect to $H(e_1),$ and therefore 
\begin{align*}
\frac{\partial^2 v^{e_1}}{\partial \tilde s^2}(x^*,1)=\frac{\partial^2 v^{e_1}}{\partial (-\tilde s)^2}(x^*,1)= -\frac{\partial^2 v^{e_1}}{\partial s^2}(x^*,1)\leq -\varepsilon,
\end{align*}
where $\tilde s(x^*)=\frac{1}{\sqrt{2}}(-\nu(x^*)+e_1).$ The proof is finished.
\end{proof}

The next lemma will be helpful to guarantee positivity near corner points.

\begin{lemma}\label{positive:at:corners}
Let $w\in C^2(\overline{B(e_1)})$ be a function such that the following holds.
\begin{itemize}
 \item[(i)] $w\equiv 0$ on $\partial B(e_1).$
 \item[(ii)] There is $\varepsilon>0$ such that 
\begin{align*}
 \frac{\partial^2 w(x)}{\partial s(x)^2}>\varepsilon \qquad  \text{ and }\qquad  \frac{\partial^2 w(x)}{\partial \tilde s(x)^2}<-\varepsilon\qquad \text{ for all } x\in H(e_1)\cap \partial B,
\end{align*}
where $s(x)=\frac{1}{\sqrt{2}}(\nu(x)+e_1),$ $\tilde s(x)=\frac{1}{\sqrt{2}}(-\nu(x)+e_1),$  and $\nu$ is the inwards unit normal vector field on $\partial B.$
\item[(iii)] There is a function $\chi :[0,\operatorname{diam}(B)\,] \to  [0,\infty)$ with $\lim \limits_{\vartheta \to 0} \chi(\vartheta)=0$ such that $|D^2 w(x)-D^2 w(y)|\leq \chi(|x-y|)$ for all $x,y\in \overline {B(e_1)}.$
\end{itemize}
Then there exists $\delta>0$ depending only on $\varepsilon,$ $\chi,$ and $B$ such that $w\geq 0$ in $[H(e_1)\cap \partial B]_{\delta}\cap B(e_1).$
\end{lemma}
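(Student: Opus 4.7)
My plan is to identify the Hessian structure of $w$ at each corner of the domain and then use a 2D factorization $\tilde w=\eta\zeta\,V$ in the plane of $y$ and its nearest corner to produce a uniform positive lower bound on $V$. First, at any corner $x_0\in H(e_1)\cap\partial B$, note that $x_0\cdot e_1=0$ and $\nu(x_0)$ is radial, so $\nu(x_0)\perp e_1$ and in particular $\nu(x_0)\in H(e_1)$. The vanishing $w\equiv 0$ on $H(e_1)\cap\overline B$ then forces $\partial_\nu w(x_0)=0$ and $D^2w(x_0)|_{H(e_1)\times H(e_1)}=0$. The vanishing $w\equiv 0$ on $\partial B\cap\overline{B(e_1)}$, combined with the standard tangential identity $D^2w(x_0)(u,v)=-\partial_\nu w(x_0)\,\mathrm{II}(u,v)$ for $u,v\in T_{x_0}\partial B$ (with $\mathrm{II}$ the second fundamental form) and the just-established $\partial_\nu w(x_0)=0$, forces $D^2w(x_0)|_{T_{x_0}\partial B\times T_{x_0}\partial B}=0$; first-order vanishing along $\partial B$ also yields $\nabla w(x_0)\parallel\nu(x_0)$. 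Combining gives $\nabla w(x_0)=0$ and, in the orthonormal basis $(e_1,\nu(x_0),\tau_1,\dots,\tau_{N-2})$ of $\R^N$, the only possibly nonzero entry of $D^2w(x_0)$ is the mixed one $b(x_0):=\partial_\nu\partial_{e_1}w(x_0)$. A direct computation gives $\partial_s^2w(x_0)=b(x_0)$, so hypothesis (ii) yields $b(x_0)>\varepsilon$.

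For $y\in B(e_1)$ with $\mathrm{dist}(y,H(e_1)\cap\partial B)$ small, fix a nearest corner $x_0$. Because $H(e_1)\cap\partial B$ is a round $(N-2)$-sphere in either the ball or annulus case, a direct projection argument shows $y-x_0$ lies in the 2D plane $P$ spanned by $\{e_1,\nu(x_0)\}$. Introduce coordinates $y=\Psi(\eta,\zeta):=x_0+\eta\,e_1+(g(\eta)+\zeta)\,\nu(x_0)$ with $\eta,\zeta\geq 0$, where $g$ is the $C^2$ defining function of $\partial B\cap P$ (satisfying $g(0)=g'(0)=0$, with uniform $C^2$ bounds given by $B$). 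Set $\tilde w(\eta,\zeta):=w(\Psi(\eta,\zeta))$; then $\tilde w\in C^2$ and $\tilde w=0$ on $\{\eta=0\}\cup\{\zeta=0\}$. Double integration yields
\[
\tilde w(\eta,\zeta)=\eta\zeta\,V(\eta,\zeta),\qquad V(\eta,\zeta):=\int_0^1\!\int_0^1 \partial_\eta\partial_\zeta\tilde w(s\eta,t\zeta)\,ds\,dt,
\]
with $V$ continuous up to the axes, and $V(0,0)=\partial_\eta\partial_\zeta\tilde w(0,0)=b(x_0)$ after observing that the chain-rule correction $g'(0)\partial_\nu^2 w(x_0)$ vanishes since both factors do.

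Finally, the chain rule gives $\partial_\eta\partial_\zeta\tilde w(\eta,\zeta)=\bigl(\partial_{e_1}\partial_\nu w+g'(\eta)\,\partial_\nu^2 w\bigr)\!\mid_{\Psi(\eta,\zeta)}$, so hypothesis (iii) together with the uniform bound $|g'(\eta)|\leq C_B\,\eta$ implies
\[
|\partial_\eta\partial_\zeta\tilde w(\eta,\zeta)-b(x_0)|\leq C'\,\chi(\eta+\zeta),
\]
with $C'$ depending only on $B$. Averaging gives $|V(\eta,\zeta)-b(x_0)|\leq C'\chi(\eta+\zeta)$, and choosing $\delta>0$ so that $C'\chi(2\delta)\leq\varepsilon/2$ (depending only on $\varepsilon$, $\chi$, $B$) forces $V\geq\varepsilon/2$ throughout the quadrant $\{\eta,\zeta\geq 0:\eta+\zeta\leq 2\delta\}$. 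Hence $\tilde w=\eta\zeta\,V\geq 0$ there, and translating back — any $y\in[H(e_1)\cap\partial B]_\delta\cap B(e_1)$ sits, relative to its nearest corner, inside the corresponding $2\delta$-quadrant — yields $w\geq 0$ on $[H(e_1)\cap\partial B]_\delta\cap B(e_1)$. The principal technical point is that the 2D reduction at each $y$ to the slice through its nearest corner lets the estimate depend only on the lower bound $b(x_0)>\varepsilon$, rather than any ambient upper bound on $\|D^2 w\|_\infty$, so $\delta$ genuinely depends only on $\varepsilon$, $\chi$, and $B$.
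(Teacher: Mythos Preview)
Your argument is correct and takes a genuinely different route from the paper's proof. The paper argues by contradiction: assuming $w(y)<0$ at some point near a corner $x^*$, it works in the two-dimensional slice containing $\mathbb{R}e_1$ and traces two line segments, one from $x^*$ in the direction $s(x^*)$ (along which $w$ is strictly convex, hence positive), and one through $y$ in the direction $\tilde s(x^*)$ (along which $w$ is strictly concave); the intersection pattern of values $0,<0,>0,0$ along the concave segment gives the contradiction. Your approach is instead constructive: you first extract the full Hessian structure at the corner (only the mixed entry $b(x_0)=\partial_\nu\partial_{e_1}w(x_0)>\varepsilon$ survives), then straighten the boundary with the graph map $\Psi$ and use the Hadamard-type factorization $\tilde w=\eta\zeta V$ to reduce matters to a pointwise lower bound on $V$ via the modulus $\chi$. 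A small point worth making explicit in your write-up: the term $g'(\eta)\,\partial_\nu^2 w(\Psi)$ is controlled by $\chi$ only because $\partial_\nu^2 w(x_0)=0$ (which you did establish), so that $|\partial_\nu^2 w(\Psi)|\le\chi(|\Psi-x_0|)$; and since $\chi$ is not assumed monotone, you should tacitly replace it by its increasing envelope $\tilde\chi(r)=\sup_{0\le s\le r}\chi(s)$ before writing the final bound. What your approach buys is a clean quantitative statement ($V\ge\varepsilon/2$) and transparency about why no global bound on $\|D^2w\|_\infty$ enters; what the paper's approach buys is that it uses the hypotheses on $\partial_s^2 w$ and $\partial_{\tilde s}^2 w$ directly, without first unpacking the entire Hessian at the corner.
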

\begin{proof}
By hypothesis $(ii)$ and $(iii)$ there is $\delta_1=\delta_1(\varepsilon,\chi)>0$ such that, for every $x^*\in H(e_1)\cap \partial B$ we have that
\begin{align}\label{perturbed:second:derivatives}
\frac{\partial^2 w(x)}{\partial s(x^*)^2}>\varepsilon \quad  \text{ and }\quad  \frac{\partial^2 w(x)}{\partial \tilde s(x^*)^2}<-\varepsilon\quad  \text{ for all }x\in \overline{B_{\delta_1}(x^*)\cap B(e_1)}.
\end{align}
If $A_1>0$ \textemdash where $A_1$ is as in \eqref{B:definition:dirichlet}\textemdash we also assume that 
\begin{align}\label{touches:the:boundary}
\delta_1<\bigg(\frac{1}{\sin(\frac{\pi}{4})}-1\bigg)A_1.
\end{align}

We show that the claim holds for any $\delta\in(0,\frac{\delta_1}{2}].$ Without loss of generality we may assume for the rest of the proof that the domain $B$ is two-dimensional, since otherwise we can repeat the following argument to $w$ restricted to $B(e_1)\cap P$ where $P$ is any plane containing $\R e_1$ to yield the result. 

By contradiction, assume there are $x^*\in H(e_1)\cap \partial B$ and $y\in B(e_1)\cap B_\delta(x^*)$ such that $w(y)<0.$ Then there are $y_1\in H(e_1)\cap B_{\delta_1}(x^*)$ and $\lambda_1>0$ such that $y=y_1+\lambda_1 \tilde s(x^*).$  Let $L_1:=\{\lambda\geq 0 \::\: y_1+\lambda \tilde s(x^*)\in \overline{B(e_1)\cap B_{\delta_1}(x^*)}\}$ and $A:=\{y_1+\lambda \tilde s(x^*)\::\: \lambda\in L_1\}.$  Note that $A\cap \partial B\neq \emptyset$, by our choice of $\delta$ and \eqref{touches:the:boundary}. In particular, we may find $\lambda_B>0$ such that $y_1+\lambda_B \tilde s(x^*)\in\partial B.$  Moreover, due to the fact that $B$ is assumed to be two-dimensional, there is $\lambda_2>0$ such that $x^*+\lambda_2s(x^*)=:y_2\in A.$  Let $L_2:=\{\lambda\geq 0\::\: x^*+\lambda s(x^*)\in \overline{B(e_1)\cap B_{\delta_1}(x^*)}\}$  and define the functions $f_1:L_1\to\R$ and $f_2:L_2\to\R$ by
\begin{align*}
 f_1(\lambda):= w(y_1+\lambda \tilde s(x^*)), \qquad f_2(\lambda):= w(x^*+\lambda  s(x^*)).
\end{align*}
By \eqref{perturbed:second:derivatives}, we have that $f_2''>\varepsilon$ in $L_2.$ By Assumption $(i)$ it follows that
\begin{align*}
 f_2'(0)=\frac{\partial w(x^*)}{\partial s(x^*)}=\frac{1}{\sqrt 2}\bigg(\frac{\partial w(x^*)}{\partial \nu(x^*)}+\frac{\partial w(x^*)}{\partial e_1}\bigg)=0=w(x^*)=f_2(0).
\end{align*}
Therefore $f_2(\lambda)>0$ for $\lambda\in L_2\cap(0,\infty).$ Since $y_2\in A,$ there is $\lambda_3>0$ such that $f_1(\lambda_3)=w(y_2)=f_2(\lambda_2)>0.$  But then $f_1(0)=0,$ $f_1(\lambda_1)=w(y)<0,$ $f_1(\lambda_3)>0,$ and $f_1(\lambda_B)=0.$ This contradicts the fact that $f_1''<-\varepsilon$ in $L_1$ by \eqref{perturbed:second:derivatives}. Therefore $w\geq 0$ in $B_\delta(x^*)\cap B(e_1)$ as claimed.
\end{proof}

We are ready to prove a perturbation result for supersolutions of scalar equations.
\begin{lemma}\label{perturbationlemma:dirichlet}
Let $v\in C^{2,1}(\overline{B}\times I)$ be a function such that $v^{e_1}$ (resp. $-v^{e_1}$) satisfies \eqref{ve1:equation} and such that assumptions $(H_{\alpha,\beta_0})$ and $(H_{k})$ hold.  Then there exists $\rho>0$ depending only on $\alpha,$ $\beta_0,$ $k$, and $B$ such that $v^{e'}(\cdot,1)\geq 0$ (resp. $-v^{e'}(\cdot,1)\geq 0$) in $B(e')$ for all $e'\in \Sn$  with $|e_1-e'|<\rho$.
\end{lemma}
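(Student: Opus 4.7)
The plan is to prove that $v^{e'}(\cdot,1)\ge 0$ on $B(e')$ for $|e'-e_1|$ small by combining three ingredients: a $C^{2}$-continuity of $e\mapsto v^e(\cdot,1)$ coming from $(H_{\alpha,\beta_0})$, the interior and Hopf-type estimates of Lemma~\ref{estimate1}, and Lemma~\ref{positive:at:corners} applied to $v^{e'}$ itself near the corner circle $H(e')\cap\partial B$. The case of $-v^{e_1}$ is analogous.

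First, since $v$ is uniformly $C^{2+\alpha,1+\alpha/2}$ with norm bounded by $\beta_0$ and $e\mapsto\sigma_e(x)$ is Lipschitz in $e$ uniformly in $x\in\overline{B}$, the map $e\mapsto v^e(\cdot,1)$ is Lipschitz from $\Sn$ to $C^2(\overline{B})$ with a constant depending only on $\alpha,\beta_0,B$; moreover $D^2 v^e(\cdot,1)$ has the uniform Hölder modulus $\chi(\vartheta)=2\beta_0\vartheta^\alpha$. This lets me transfer any $C^k$-estimate ($k\le 2$) on $v^{e_1}(\cdot,1)$ into the same estimate on $v^{e'}(\cdot,1)$ with half the constant, as soon as $|e'-e_1|$ is small in a sense that depends only on $\alpha,\beta_0,k,B$.

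Next, I would treat the corner region $N_c:=[H(e')\cap\partial B]_{\delta_c}\cap B(e')$ by applying Lemma~\ref{positive:at:corners} with $e_1$ replaced by $e'$ (it is invariant under rotations) to $w:=v^{e'}(\cdot,1)$: hypothesis (i) holds because $v\equiv 0$ on $\partial B$ and $\sigma_{e'}$ preserves $B$, $\partial B$ and $H(e')$; hypothesis (iii) is the uniform Hölder modulus $\chi$; and hypothesis (ii), the corner second-derivative bounds, follows from Lemma~\ref{estimate2} for $v^{e_1}$ together with the continuity of $D^2 v^{e'}$ in $x$ and $e'$ (the directions $s(y),\tilde s(y)$ at $y\in H(e')\cap\partial B$ depend continuously on $y$ and $e'$). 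This furnishes a $\delta_c>0$ depending only on the allowed constants such that $v^{e'}(\cdot,1)\ge 0$ on $N_c$.

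Finally, on $B(e')\setminus N_c$ I split into a boundary layer $\{\operatorname{dist}(x,\partial B)<\delta_b\}$ and the remaining interior. In the layer, \eqref{estimate1:1} applied with parameter $\delta_c/2$ and the perturbation of Step~1 give $\partial v^{e'}/\partial\nu'\ge\varepsilon_1/2$ on the corresponding part of $\partial B\cap\overline{B(e')}$, and since $v^{e'}\equiv 0$ on $\partial B(e')$ this integrates to $v^{e'}(x,1)\ge(\varepsilon_1/4)\operatorname{dist}(x,\partial B(e'))$ in the layer. In the interior I split once more by $x\cdot e_1$: when $x\cdot e_1\ge\delta_i$, estimate \eqref{estimate1:2} gives $v^{e_1}(x,1)\ge\mu\delta_i$ and $C^0$-continuity delivers the bound; when $|x\cdot e_1|<\delta_i$, then $x\cdot e'<2\delta_i$ and I integrate
\[
v^{e'}(x,1)=\int_{-x\cdot e'}^{x\cdot e'}\partial_{e'}v\bigl(x-(x\cdot e')e'+se',1\bigr)\,ds,
\]
using that \eqref{estimate1:1} yields $\partial_{e_1}v=\tfrac12\partial_{e_1}v^{e_1}\ge\varepsilon_1/2$ on the interior of $H(e_1)\cap B$ (since $\nu=e_1$ there), which by $C^1$-continuity in $x$ and $e'$ persists as $\partial_{e'}v\ge\varepsilon_1/8$ on a tube around the interior of $H(e')\cap B$ containing the integration segment. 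The main obstacle is scheduling these thresholds in the right order: $\delta_c$ is fixed first from Lemma~\ref{positive:at:corners}, then $\delta_b,\delta_i$ in terms of $\delta_c$, and finally $\rho$ is chosen small enough that all $C^0$-, $C^1$-, $C^2$-perturbations hold simultaneously on each region. Since every constant along the way depends only on $\alpha,\beta_0,k,B$, so does $\rho$.
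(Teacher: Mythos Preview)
Your proposal is correct and follows essentially the same three-region strategy as the paper: handle a neighbourhood of the corner circle $H(e')\cap\partial B$ via Lemma~\ref{estimate2} combined with Lemma~\ref{positive:at:corners}, handle a collar of the rest of $\partial B(e')$ via the Hopf-type estimate~\eqref{estimate1:1}, and handle the remaining interior via~\eqref{estimate1:2}, all transferred from $e_1$ to nearby $e'$ by the uniform $C^{2+\alpha}$ bound in $(H_{\alpha,\beta_0})$.

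The only noteworthy difference is in the bookkeeping of the non-corner boundary. The paper applies~\eqref{estimate1:1} once to \emph{all} of $\partial B(e_1)\setminus[\partial B\cap H(e_1)]_{\delta_1}$ (the curved piece on $\partial B$ and the flat piece on $H(e_1)$ simultaneously), then takes a single collar $[\partial B(e')\setminus[\partial B\cap H(e')]_{\delta_1}]_{\delta_2}$ around it; this leaves exactly $B(e')\setminus[\partial B(e')]_{\delta_2}$ for the interior estimate. You instead split that boundary in two: the curved part via your ``boundary layer'' $\{\operatorname{dist}(\cdot,\partial B)<\delta_b\}$, and the flat part via the integration identity $v^{e'}(x,1)=\int_{-x\cdot e'}^{x\cdot e'}\partial_{e'}v(x_0+se',1)\,ds$. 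Both routes work, but the paper's is shorter and avoids the extra scheduling between $\delta_b$ and $\delta_i$; since~\eqref{estimate1:1} already delivers the normal-derivative bound on the flat part $H(e_1)\cap B$ away from the corners, there is no need for a separate integration argument there.
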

\begin{proof} 
Assume that $v^{e_1}$ satisfies \eqref{ve1:equation} and that $(H_{\alpha,\beta_0})$ and $(H_{k})$ hold. By Lemma~\ref{estimate2} and assumption $(H_{\alpha,\beta_0}),$ there are $\varepsilon_1>0$ and $\rho_1>0$ such that 
\begin{align*}
\frac{\partial^2 v^{e'}}{\partial s(x)^2}(x,1)> \varepsilon_1\quad  \text{ and }\quad \frac{\partial^2 v^{e'}}{\partial \tilde s(x)^2}(x,1)<- \varepsilon_1 \quad\text{ for } x\in \partial B\cap H(e')
\end{align*}
and for $e'\in\Sn$ with $|e'-e_1|<\rho_1.$ By Lemma~\ref{positive:at:corners}, there is $\delta_1\in(0,\frac{A_1-A_2}{2})$ such that 
\begin{align}\label{easy3}
 v^{e'}(\cdot,1)\geq 0\qquad  \text{ in } [\partial B\cap H(e')]_{\delta_1}\cap B(e')
\end{align}
for all $e'\in\Sn$ with $|e'-e_1|<\rho_1.$  Further, by Lemma~\ref{estimate1} with $\delta=\delta_1$ there is $\varepsilon_2>0$ with
$\frac{\partial v^{e_1}}{\partial \nu}(\cdot,1)> \varepsilon_2$ on $\partial B(e_1)\backslash[\partial B\cap H(e_1)]_{\delta_1}.$ Then, by $(H_{\alpha,\beta_0}),$ there are $\delta_2\in(0,\delta_1)$ and $\rho_2\in(0,\rho_1)$ with the property $\frac{\partial v^{e'}}{\partial \nu}(\cdot,1)> \varepsilon_2$ in $[\partial B(e')\backslash[\partial B\cap H(e')]_{\delta_1}]_{\delta_2}$ for all $e'\in\Sn$ with $|e'-e_1|<\rho_2.$  Since $v^{e'}(\cdot,1)\equiv 0$ on $\partial B(e')$ we have
\begin{align}\label{easy6}
v^{e'}(\cdot,1)>0\qquad \text{ in }[\partial B(e')\backslash[\partial B\cap H(e')]_{\delta_1}]_{\delta_2}\cap B(e')
\end{align}
for all $e'\in\Sn$ with $|e'-e_1|<\rho_2.$  Moreover, by Lemma~\ref{estimate1} with $\delta=\delta_2$ there is $\mu>0$ with $v^{e_1}(x,1)\geq \mu x_1>\frac{\mu \delta_2}{2}$ for all $x\in B(e_1)\backslash [\partial B(e_1)]_{\delta_2}.$ Then, by $(H_{\alpha,\beta_0}),$ there is $\rho_3\in(0,\rho_2)$ such that 
\begin{align}\label{easy5}
v^{e'}(x,1)>\frac{\mu\delta_2}{2}\qquad \text{ for all }x\in B(e')\backslash [\partial B(e')]_{\delta_2}
\end{align}
for all $e'\in\Sn$ with $|e'-e_1|<\rho_3.$  Therefore \eqref{easy3}, \eqref{easy6}, and \eqref{easy5} imply that $v^{e'}(\cdot,1)\geq 0$ in $B(e')$ for all $e'\in\Sn$ with $|e'-e_1|<\rho_3.$ It is easy to check that $\rho_3$ depends only on $\alpha,$ $\beta_0,$ $k$, and $B$ so this yields the claim for $v^{e_1}$. 

From the proof of Lemmas \ref{estimate1} and \ref{estimate2} it is clear that their claims also hold if we write $-v^{e_1}$ instead of $v^{e_1}.$ Therefore we can argue exactly the same if we now assume that $-v^{e_1}$ satisfies \eqref{ve1:equation}. This ends the proof.
\end{proof}

We now use Lemma \ref{perturbationlemma:dirichlet} to prove a perturbation result for systems. First we fix some notation. For a pair of functions $(v_{1},v_{2})\in C^{2,1}(\overline{B}\times I)\cap C(\overline{B}\times I)\times C^{2,1}(\overline{B}\times I)\cap C(\overline{B}\times I)$ and $e\in \Sn$ define the functions $v_1^e,v_2^e:\overline{B}\to\R$ by
\begin{equation}\label{linearization:v:e}
 v_1^e(x,t):=v_1(x,t)-v_1(\sigma_e(x),t),\qquad  v_2^e(x,t):=v_2(\sigma_e(x),t)-v_2(x,t).
\end{equation}
Assume that 
 \begin{equation}\label{systems:general}
 \begin{aligned}
 (v_1^{e_1})_t-\Delta v_1^{e_1} -c_{11} v_1^{e_1}-c_{12} v_2^{e_1}&\geq 0 \qquad \text{ in } B({e_1})\times I,\\
 (v_2^{e_1})_t-\Delta v_2^{e_1} -c_{21} v_1^{e_1}-c_{22} v_2^{e_1}&\geq 0 \qquad \text{ in } B({e_1})\times I,\\
v_1^{e_1} = v_2^{e_1} &=0\qquad \text{ on } \partial B({e_1})\times I,\\
 v_1^{e_1}\geq 0\quad\text{ and }\quad v_2^{e_1}&\geq 0\qquad \text{ in } B({e_1})\times I,
 \end{aligned}
 \end{equation}
where 
\begin{align}\label{coefficients:general}
 c_{12}\geq 0\quad  \text{and}\quad c_{21}\geq 0\qquad \text{ in }B({e_1})\times I.
\end{align}

To simplify the notation, we write
\begin{align}\label{Q:notation}
Q:=B({e_1})\times[\frac{1}{7},\frac{4}{7}]\ \  \text{ and }\ \  Q_\delta:=(B(e_1)\backslash [\partial B(e_1)]_\delta)\times [\frac{2}{7}\,,\,\frac{3}{7}],
\end{align}
for $\delta>0,$ where $[\,\cdot\,]_\delta$ was defined in \eqref{delta:notation}. Note that $Q_\delta\subset\subset Q.$

\begin{prop}\label{general:framework:prop}
 Let $\gamma\in(0,1)$ and $M>0$ be given constants and let $v_{1},v_{2}\in C^{2,1}(\overline{B}\times I)\cap C(\overline{B}\times I)$ be functions such that 
 \begin{align}\label{equicontinuity:general}
 |v_{i}|_{2+\gamma;B\times I}< M \qquad  \text{ for }i=1,2.
 \end{align}
 Suppose $(v_1^{e_1},v_2^{e_1})$ satisfies \eqref{systems:general}, \eqref{coefficients:general}, and $\sum\limits_{i,j=0}^2\|c_{ij}\|_{L^\infty(B(e_1)\times I)}<M.$ 
 \begin{itemize}
 \item [(i)] If there is $\delta>0$ and $\varepsilon_1>0$ such that
 \begin{align}\label{claim1:general}
\|v_{2}^{e_1}\|_{L^\infty(Q_\delta)} > \varepsilon_1\qquad \text{ and }\qquad \inf\limits_{Q_\delta}c_{12}>\varepsilon_1,
 \end{align}
then there exists $\sigma>0$ depending only on $\varepsilon_1,$ $\delta,$ $M,$ $\gamma,$ and $B$ such that $\|v_{1}^{e_1}\|_{L^\infty(Q)} >\sigma,$ where $Q$ and $Q_\delta$ are as in \eqref{Q:notation}.
\item[(ii)] If there is $\varepsilon_2>0$ such that $\|v_{i}^{e_1}\|_{L^\infty(Q)} > \varepsilon_2$ for $i=1,2,$ then there is $\rho>0$ depending only on $\varepsilon_2,$ $M,$ $\gamma,$ and $B$ such that 
\begin{align*}
\text{$v_{i}^{e'}(\cdot,1)\geq 0$ in $B(e')$}\qquad\text{ for $i=1,2$ and $e'\in \Sn$ with $|{e_1}-e'|<\rho.$}
\end{align*}
\end{itemize}
\end{prop}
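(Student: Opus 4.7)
For (ii) I would dispatch the claim by a componentwise application of the scalar perturbation result. Because $c_{12},c_{21}\geq 0$ and $v_1^{e_1},v_2^{e_1}\geq 0$ on $B(e_1)\times I$, dropping the nonnegative coupling term in each line of \eqref{systems:general} shows that each component $v_i^{e_1}$ satisfies the scalar supersolution inequality \eqref{ve1:equation} with coefficient $c=c_{ii}$. The bound \eqref{equicontinuity:general} gives $|v_i^{e_1}|_{2+\gamma;B\times I}\leq 2M$ since $\sigma_{e_1}$ is an isometry, so together with $\|c_{ii}\|_\infty<M$ we verify $(H_{\gamma,3M})$; the assumption $\|v_i^{e_1}\|_{L^\infty(Q)}>\varepsilon_2$ is exactly $(H_{\varepsilon_2})$ because $Q=B(e_1)\times[1/7,4/7]$. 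Applying Lemma \ref{perturbationlemma:dirichlet} to $v_1$ directly and to $v_2$ through the ``resp.'' clause (note that $v_2^{e_1}$ in the convention \eqref{linearization:v:e} is $-(v_2-v_2\circ\sigma_{e_1})$) produces radii $\rho_1,\rho_2>0$ depending only on $\gamma,M,\varepsilon_2,B$, and one sets $\rho:=\min\{\rho_1,\rho_2\}$.

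For (i) I would argue by contradiction, trading a quantitative lower bound for a compactness extraction based on the uniform $C^{2+\gamma,1+\gamma/2}$ control. Suppose (i) fails: with $\varepsilon_1,\delta,M,\gamma,B$ fixed there exist sequences $(v_1^n,v_2^n)$ and coefficient matrices $(c_{ij}^n)$ satisfying every hypothesis of (i) while $\|v_1^{n,e_1}\|_{L^\infty(Q)}\to 0$. From \eqref{equicontinuity:general} we retain $|v_i^{n,e_1}|_{2+\gamma;B(e_1)\times I}\leq 2M$. The set $\overline{Q_\delta}$ sits at strictly positive distance from the parabolic boundary of $B(e_1)\times(0,1)$ by the construction in \eqref{Q:notation} (the $\delta$-neighborhood of $\partial B(e_1)$ is removed and the time window is $[2/7,3/7]\subset(0,1)$), so Arzel\`a--Ascoli delivers a subsequence with $v_i^{n,e_1}\to\tilde v_i$ in $C^{2,1}(\overline{Q_\delta})$ for $i=1,2$. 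The hypothesis $\|v_1^{n,e_1}\|_{L^\infty(Q)}\to 0$ forces $\tilde v_1\equiv 0$ on $\overline{Q_\delta}\subset Q$, whence $(v_1^{n,e_1})_t$, $\Delta v_1^{n,e_1}$ and $v_1^{n,e_1}$ all converge uniformly to zero on $\overline{Q_\delta}$.

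Rewriting the first line of \eqref{systems:general} as
\[
c_{12}^n\, v_2^{n,e_1}\ \leq\ (v_1^{n,e_1})_t-\Delta v_1^{n,e_1}-c_{11}^n\, v_1^{n,e_1}\quad\text{in }Q_\delta,
\]
and using $\|c_{11}^n\|_\infty<M$, the right-hand side tends to zero uniformly on $\overline{Q_\delta}$. Combined with $\inf_{Q_\delta}c_{12}^n>\varepsilon_1$ and $v_2^{n,e_1}\geq 0$, this forces $v_2^{n,e_1}\to 0$ uniformly on $\overline{Q_\delta}$, contradicting $\|v_2^{n,e_1}\|_{L^\infty(Q_\delta)}>\varepsilon_1$. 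The main obstacle I anticipate is the compactness step: one must ensure that the ambient parabolic H\"older bound on $B\times I$ actually upgrades to $C^{2,1}$-equicontinuity on a fixed interior set $\overline{Q_\delta}$ without any appeal to boundary regularity, and the definition of $Q_\delta$ in \eqref{Q:notation} is precisely engineered to make this interior containment automatic.
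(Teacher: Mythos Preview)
Your treatment of part (ii) is exactly the paper's: drop the nonnegative coupling term to get a scalar supersolution inequality for each component and invoke Lemma~\ref{perturbationlemma:dirichlet}. The attention to the sign convention for $v_2^{e_1}$ is appropriate.

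For part (i) your argument is correct but follows a different route. The paper proceeds directly and quantitatively: using the equicontinuity bound \eqref{equicontinuity:general} it locates a set $\Omega\subset Q_\delta$ of measure $\geq \sigma_1(\varepsilon_1,\delta,M,\gamma,B)$ on which $v_2^{e_1}>\varepsilon_1$, then tests the first equation of \eqref{systems:general} against a fixed cutoff $\varphi\in C_c^\infty(Q)$ with $\varphi\equiv 1$ on $Q_\delta$ and integrates by parts to obtain
\[
\varepsilon_1^2\sigma_1\ \leq\ \int_Q c_{12}\,v_2^{e_1}\,\varphi\ \leq\ \int_Q\bigl[(v_1^{e_1})_t-\Delta v_1^{e_1}-c_{11}v_1^{e_1}\bigr]\varphi\ \leq\ C(\varphi,M)\,\|v_1^{e_1}\|_{L^\infty(Q)},
\]
yielding an explicit $\sigma=\varepsilon_1^2\sigma_1/C$. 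Your contradiction/compactness argument instead extracts via Arzel\`a--Ascoli (legitimately, since the $C^{2+\gamma}$ bound is global on $B\times I$ and $\overline{Q_\delta}$ is compact inside it) a $C^{2,1}$-convergent subsequence of $v_1^{n,e_1}$ with zero limit, and then reads off $v_2^{n,e_1}\to 0$ from the pointwise inequality $c_{12}^n v_2^{n,e_1}\leq (v_1^{n,e_1})_t-\Delta v_1^{n,e_1}-c_{11}^n v_1^{n,e_1}$. The trade-off is constructivity versus softness: the paper's test-function argument produces a concrete lower bound and avoids any subsequence extraction, while your route is shorter to write and makes no use of a cutoff, at the cost of a non-explicit $\sigma$. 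Incidentally, the ``obstacle'' you flag is milder than you suggest: the parabolic H\"older bound in \eqref{equicontinuity:general} already controls $D^2_x v_i$ and $\partial_t v_i$ uniformly on all of $B\times I$, so the interior location of $Q_\delta$ is not needed for the equicontinuity itself, only for the compactness of $\overline{Q_\delta}$.
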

\begin{proof}
To prove $(i),$ note that \eqref{claim1:general} and \eqref{equicontinuity:general} imply that there is a nonempty open subset $\Omega\subset Q_\delta$ such that $v_{2}^{e_1} > \varepsilon_1$ in $\Omega$ and $|\Omega|\geq \sigma_1$ for some $\sigma_1$ depending only on $\varepsilon_1,$ $\delta,$ $\gamma,$ $M,$ and $B.$ Let $\varphi\in C_c^\infty(Q)$ be a function depending only on $\delta$ and $B$ such that $\varphi\geq 0$ in $Q$ and $\varphi\equiv 1$ in $Q_\delta$.  Then  
 \begin{align*}
  A:=\int_Q c_{12}\, v_i^{e_1}\, \varphi\, d(x,t)\geq \int_{\Omega} c_{12}\, v_2^{e_1}\, d(x,t)\geq \varepsilon_1^2 \sigma_1,
  \end{align*}
by \eqref{coefficients:general}, \eqref{claim1:general}, and the choice of $\varphi$.  On the other hand, 
\begin{align*}
 A &= \int_{Q} [(v^{e_1}_1)_t-\Delta v_1^{e_1}-c_{11}v_1^{e_1}]\varphi\ d(x,t)=\int_{Q} (-\varphi_t-\Delta \varphi-c_{11}\varphi)v_1^{e_1}\ d(x,t)\\
 &\leq\|v_1^{e_1}\|_{L^\infty(Q)} \int_{Q} |\varphi_t|+|\Delta \varphi|+M|\varphi|\ d(x,t).
\end{align*}
by \eqref{systems:general} and integration by parts. Therefore $\|v_1^{e_1}\|_{L^\infty(Q)}\geq \frac{ \varepsilon_1^2\sigma_1}{C}=:\sigma>0,$ where $C:=\int_{Q} |\varphi_t|+|\Delta \varphi|+M|\varphi|\ d(x,t).$ Since $\sigma$ depends only $\varepsilon_1,$ $\delta,$ $\gamma,$ $M,$ and $B$ this proves the first claim $(i)$. For the second claim $(ii)$, note that
 \begin{equation*}
(v_i)^{e_1}_t-\Delta v_i^{e_1} -c_{ii} v_i^{e_1}\geq c_{ij} v_j^{e_1}\geq 0 \qquad \text{ in } B({e_1})\times I,\text{ for } i,j=1,2\ i\neq j,
\end{equation*}
by \eqref{systems:general} and \eqref{coefficients:general}.  Then, for $i=1,2$ we have that $v_i$ satisfies the assumptions of Lemma \ref{perturbationlemma:dirichlet} with $\alpha=\gamma,$ $k=\varepsilon,$ and $\beta_0=M.$ The application of Lemma \ref{perturbationlemma:dirichlet} gives a constant $\rho>0$ such that $v_{i}^{e'}(\cdot,1)\geq 0$ in $B(e')$ for $i=1,2$ and $e'\in \Sn$ with $|{e_1}-e'|<\rho,$ as claimed.
 \end{proof}

\subsection{Homogeneous linear equations}\label{subsec:homogeneous:linear:equation}

We quote some estimates from \cite{huska:polacik:safonov}. This will be used in the proof of Theorem \ref{main:theorem} to guarantee condition \eqref{claim1:general} for the linearized system.

\begin{lemma}[Particular cases of Lemma 3.9 and Corollary 3.10 in \cite{huska:polacik:safonov}] \label{huska:polacik:safonov:lemma}
Let  $v\in C^{2,1}(B\times (0,\infty))\cap C(\overline{B}\times [0,\infty))$ be a positive solution of
\begin{equation*}
\begin{aligned}
 v_t-\Delta v -c v=0\quad \text{ in } B\times (0,\infty),\qquad  v=0 \quad\text{ on } \partial B\times (0,\infty),
\end{aligned}
\end{equation*}
where $\|c\|_{L^\infty(B\times (0,\infty))}<\beta_0$ for some $\beta_0>0.$  Then, there are positive constants $C_1,$ $C_2,$ and $\vartheta$ depending only on $B$ and $\beta_0$ such that
\begin{align}\label{huska:quotient:0}
\frac{v(x,\tau)}{\|v(\cdot,\tau)\|_{L^\infty(B)}}\geq C_1 (\operatorname{dist}(x,\partial B))^\vartheta, \qquad x\in B,\ \tau\in[1,\infty),
\end{align}
and
\begin{align}\label{huska:quotient:1}
\frac{\|v(\cdot,\tau+t)\|_{L^\infty(B)}}{\|v(\cdot,\tau)\|_{L^\infty(B)}}\in\bigg [C_2\,,\,\frac{1}{C_2}\bigg], \qquad  \tau\in(1,\infty),\ t\in[0,1],
\end{align}
where $\operatorname{dist}(x,\partial B):=\inf\{|x-y|\::\: y\in \partial B\}.$
\end{lemma}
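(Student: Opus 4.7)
The plan is to prove (\ref{huska:quotient:0}) first, by a boundary barrier comparison, and then to deduce (\ref{huska:quotient:1}) by combining (\ref{huska:quotient:0}) with the parabolic Harnack inequality of Lemma~\ref{nonhomogeneous:harnack:inequality:polacik}.

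For (\ref{huska:quotient:0}) I would use a subsolution supported up to $\partial B$. Since $B$ is a ball or an annulus with smooth boundary, pick $\phi\in C^2(\overline{B})$ with $\phi>0$ in $B$, $\phi\equiv 0$ on $\partial B$, and $\phi(x)\geq c_0\,\operatorname{dist}(x,\partial B)$ near $\partial B$ (for instance the first Dirichlet eigenfunction of $-\Delta$). Choose $\lambda=\lambda(B,\beta_0)$ so large that $w(x,t):=e^{-\lambda(\tau-t)}\phi(x)$ satisfies $w_t-\Delta w-cw\leq 0$ on $B\times[\tau-1,\tau]$. An application of Lemma~\ref{nonhomogeneous:harnack:inequality:polacik} to the positive solution $v$ on a fixed interior subdomain $D\subset\subset B$ yields $\inf_{D}v(\cdot,\tau-1)\geq c_1\|v(\cdot,\tau-1)\|_{L^\infty(B)}$, which together with the $L^\infty$ control $\|v(\cdot,\tau-1)\|_{L^\infty(B)}\geq e^{-\beta_0}\|v(\cdot,\tau)\|_{L^\infty(B)}$ (from the maximum principle applied to $e^{-\beta_0 t}v$) produces a uniform lower bound on $v(\cdot,\tau-1)$ in terms of $\|v(\cdot,\tau)\|_{L^\infty(B)}$. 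Normalising $w$ by this constant and invoking the parabolic maximum principle on $B\times[\tau-1,\tau]$ delivers (\ref{huska:quotient:0}), with $\vartheta=1$ in this smooth setting.

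For (\ref{huska:quotient:1}), the upper bound is immediate: the function $\tilde v:=e^{-\beta_0 t}v$ solves a parabolic equation with nonpositive zeroth-order coefficient $c-\beta_0$, so $\|\tilde v(\cdot,\tau+t)\|_{L^\infty(B)}\leq \|\tilde v(\cdot,\tau)\|_{L^\infty(B)}$ by the maximum principle, whence $\|v(\cdot,\tau+t)\|_{L^\infty(B)}\leq e^{\beta_0}\|v(\cdot,\tau)\|_{L^\infty(B)}$ for $t\in[0,1]$. The lower bound is the more delicate step. Using (\ref{huska:quotient:0}) at time $\tau$, fix a point $x_0\in B$ with $\operatorname{dist}(x_0,\partial B)=d_0(B)$; then $v(x_0,\tau)\geq C_1 d_0^{\vartheta}\|v(\cdot,\tau)\|_{L^\infty(B)}$. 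A further application of Lemma~\ref{nonhomogeneous:harnack:inequality:polacik} on a small cylinder centred at $x_0$ and covering the time window $[\tau-\tfrac12,\tau+1]$ gives $v(x_0,\tau+t)\geq c_2\,v(x_0,\tau)$ uniformly in $t\in[0,1]$, from which $\|v(\cdot,\tau+t)\|_{L^\infty(B)}\geq v(x_0,\tau+t)\geq c_2 C_1 d_0^{\vartheta}\|v(\cdot,\tau)\|_{L^\infty(B)}$, yielding (\ref{huska:quotient:1}).

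The main obstacle is to keep every constant free of dependence on the specific solution $v$. Both estimates are homogeneous of degree one in $v$, so each step must be rescaled using the normalisation by $\|v(\cdot,\tau)\|_{L^\infty(B)}$; the role of the restriction $\tau\geq 1$ is precisely to wash out any residue of the initial data via parabolic smoothing over a unit of time. Once this homogeneity is preserved, the barrier data $(\phi,\lambda)$ and the Harnack constants in Lemma~\ref{nonhomogeneous:harnack:inequality:polacik} depend only on $B$ and $\beta_0$, which is exactly the uniformity required by the statement.
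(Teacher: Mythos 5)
This lemma is a citation, not a result proved in the paper: it is labelled ``Particular cases of Lemma 3.9 and Corollary 3.10 in [huska:polacik:safonov],'' and the sentence immediately preceding it reads ``We quote some estimates from [huska:polacik:safonov].'' There is therefore no proof of the paper's to compare yours against; the only question is whether your argument would actually close, and as written it does not.

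The first gap is the step claiming that Lemma~\ref{nonhomogeneous:harnack:inequality:polacik} directly gives $\inf_D v(\cdot,\tau-1)\geq c_1 \|v(\cdot,\tau-1)\|_{L^\infty(B)}$. That Harnack inequality bounds $\inf_{D\times(\tau_3,\tau_4)}v$ from below by an $L^p$ average of $v$ over $D\times(\tau_1,\tau_2)$ for an \emph{interior} subdomain $D$; it does not, by itself, relate the infimum on $D$ to the supremum over the whole of $B$. To bridge these, one must rule out that the bulk of $v$ sits in a thin boundary strip at the earlier times, and this is precisely the nontrivial content of the H\'{u}ska--Pol\'{a}\v{c}ik--Safonov estimates you are trying to reprove. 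Your sketch implicitly assumes an interior-to-global comparison that is itself a form of the conclusion \eqref{huska:quotient:0}.

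The barrier step has a second gap. With $w(x,t)=e^{-\lambda(\tau-t)}\phi(x)$ positive on all of $B$, the comparison principle on $B\times[\tau-1,\tau]$ requires $w\le v$ on the entire parabolic boundary, in particular on $B\times\{\tau-1\}$. You control $v(\cdot,\tau-1)$ from below only on $D\subset\subset B$; on the strip $B\setminus D$ you know only $v>0$, with no quantitative bound \textemdash\ and a quantitative bound there is exactly \eqref{huska:quotient:0}. The standard remedy, the one this very paper uses in Lemma~\ref{estimate1}, is to carry out the comparison on a lens-shaped region near a boundary point whose lateral face sits inside $D$ (where Harnack gives a lower bound) and on whose remaining faces the barrier vanishes, so that no lower bound for $v$ over all of $B$ at time $\tau-1$ is needed. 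Finally, the Harnack application ``on a small cylinder centred at $x_0$'' cannot by itself deliver $v(x_0,\tau+t)\geq c_2\,v(x_0,\tau)$ from a single pointwise value: Lemma~\ref{nonhomogeneous:harnack:inequality:polacik} controls an infimum by an $L^p$ \emph{average}, so you would need \eqref{huska:quotient:0} to hold on an entire time interval around $\tau$ before that average is bounded below. The ingredients you reach for (interior Harnack, boundary barrier, maximum principle) are the right ones and are indeed used by the cited authors, but the two steps above are exactly where the real work lies and cannot be shortcut as proposed.
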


By combining \eqref{huska:quotient:0} and \eqref{huska:quotient:1} we get the following.

\begin{coro}\label{huska:polacik:safonov:corollary}
Assume the hypothesis of Lemma \ref{huska:polacik:safonov:lemma}.  For any $k>0$ there are positive constants $C$ and $\vartheta$ depending only on $B,$ $\beta_0,$ and $k$ such that, for any $\tau\geq 2k,$
\begin{align*}
C\geq \frac{v(x,\tau+t)}{\|v(\cdot,\tau)\|_{L^\infty(B)}}\geq C^{-1} (\operatorname{dist}(x,\partial B))^\vartheta, \qquad x\in B,\ t\in[-k,k].
\end{align*}
\end{coro}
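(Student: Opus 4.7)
The plan is to derive the double inequality by a finite iteration combining the two estimates of Lemma~\ref{huska:polacik:safonov:lemma}. For the upper bound I will use the trivial pointwise estimate $v(x,\tau+t)\le\|v(\cdot,\tau+t)\|_{L^\infty(B)}$ and then control the sup-norm ratio $\|v(\cdot,\tau+t)\|_{L^\infty(B)}/\|v(\cdot,\tau)\|_{L^\infty(B)}$ for $t\in[-k,k]$ by iterating \eqref{huska:quotient:1}. For the lower bound I will first apply \eqref{huska:quotient:0} at time $\tau+t$ to pull out the factor $(\operatorname{dist}(x,\partial B))^{\vartheta}$, and then compare the two sup-norms as above, this time using the lower bound in \eqref{huska:quotient:1}.

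Concretely, set $m:=\lceil k\rceil+1$ and split the time interval joining $\tau$ and $\tau+t$ into at most $m$ subintervals of length at most $1$. Iterating \eqref{huska:quotient:1} on each piece (going forward when $t\geq 0$, backward when $t<0$) yields
\begin{align*}
C_2^{m}\,\|v(\cdot,\tau)\|_{L^\infty(B)}\ \le\ \|v(\cdot,\tau+t)\|_{L^\infty(B)}\ \le\ C_2^{-m}\,\|v(\cdot,\tau)\|_{L^\infty(B)}
\end{align*}
for every $t\in[-k,k]$. Here the role of the hypothesis $\tau\ge 2k$ is precisely that every intermediate base time lies in the interval $[\tau-k,\tau+k]\subset[k,\infty)$, so that after allowing the constants $C_1$, $C_2$, $\vartheta$ inherited from Lemma~\ref{huska:polacik:safonov:lemma} to depend on $k$ (which is permitted by the statement), each step falls inside the admissible time range of \eqref{huska:quotient:0} and \eqref{huska:quotient:1}. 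Combining the resulting sup-norm control with \eqref{huska:quotient:0} applied at time $\tau+t$ and setting
\begin{align*}
C:=\max\bigl\{C_2^{-m},\,(C_1 C_2^{m})^{-1}\bigr\}
\end{align*}
gives the desired two-sided inequality with the exponent $\vartheta$ supplied by Lemma~\ref{huska:polacik:safonov:lemma}, and $C$ depending only on $B$, $\beta_0$, and $k$.

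The main obstacle is not conceptual but rather the bookkeeping: one must verify that the chain of applications of \eqref{huska:quotient:0} and \eqref{huska:quotient:1} stays inside the admissible time range throughout the iteration. The shifted threshold $\tau\ge 2k$ in the hypothesis, together with the freedom to let the constants depend on $k$, is engineered for exactly this purpose, so once the bookkeeping is organized the two inequalities drop out immediately.
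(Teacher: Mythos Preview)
Your argument is correct and reaches the same conclusion, but the route differs from the paper's. The paper does not iterate: it performs a single time rescaling $\tilde v(x,t):=v(x,kt)$, observes that $\tilde v$ again satisfies the hypotheses of Lemma~\ref{huska:polacik:safonov:lemma} (now with coefficient bound $k\beta_0$, hence constants depending on $B,\beta_0,k$), and then applies \eqref{huska:quotient:0} and \eqref{huska:quotient:1} once to $\tilde v$ on the unit time window. Undoing the rescaling immediately gives the stated two-sided bound for $v$ on $[\tau-k,\tau+k]$. Your iteration over $m=\lceil k\rceil+1$ subintervals is a perfectly valid alternative; it is more hands-on and makes the $k$-dependence of the final constant explicit ($C_2^{-m}$ versus $C_1C_2^{m}$), whereas the rescaling hides the dependence inside the constants returned by the lemma.

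One small point worth tightening: when $k<1$ the intermediate base times in your chain lie in $[k,\infty)$, which may fall below the threshold $\tau\ge1$ (resp.\ $\tau>1$) required in \eqref{huska:quotient:0}--\eqref{huska:quotient:1} as stated. Your parenthetical ``allowing the constants\ldots to depend on $k$'' is the right instinct, but note that this is exactly what the paper's rescaling accomplishes: applying Lemma~\ref{huska:polacik:safonov:lemma} to $\tilde v(x,t)=v(x,kt)$ yields versions of \eqref{huska:quotient:0}--\eqref{huska:quotient:1} valid for $v$ from time $k$ onward, with $k$-dependent constants. Invoking that observation (or simply assuming $k\ge1$ without loss of generality, since the claim for smaller $k$ follows from the claim for $k=1$ once $\tau\ge2$) closes the gap cleanly.
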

\begin{proof}
 Let $\tilde v: B\times(0,\infty)\to\R$ be given by $\tilde v(x,t):= v(x,kt)$ for $x\in B,$ $t>0.$ Clearly $\tilde v$ satisfies the assumptions of Lemma \ref{huska:polacik:safonov:lemma}. Let $C_1,$ $C_2,$ and $\vartheta$ be the constants (depending only on $B,$ $\beta_0,$ and $k$) given by Lemma \ref{huska:polacik:safonov:lemma} for $\tilde v.$  Note that \eqref{huska:quotient:1} implies that 
\begin{align}\label{huska:quotient:2}
\frac{\|\tilde v(\cdot,\tau)\|_{L^\infty(B)}}{\|\tilde v(\cdot,\tau+t)\|_{L^\infty(B)}}\in\bigg [C_2\,,\,\frac{1}{C_2}\bigg], \qquad  \tau\in(1,\infty),\ t\in[0,1],
\end{align}

Then, by \eqref{huska:quotient:0}, \eqref{huska:quotient:1}, and \eqref{huska:quotient:2}, 
\begin{align*}
 C_2^{-1}\|\tilde  v(\cdot,\tau)\|_{L^\infty(B)} &\geq \tilde v(x,t+\tau)\geq C_1 (\operatorname{dist}(x,\partial B))^\vartheta\|\tilde  v(\cdot,t+\tau)\|_{L^\infty(B)}\\
 &\geq C_1 C_2 (\operatorname{dist}(x,\partial B))^\vartheta \|\tilde  v(\cdot,\tau)\|_{L^\infty(B)}
\end{align*}
for all $x\in B,$ $\tau\geq 2,$ and $t\in(-1,1).$  The result follows.
\end{proof}

\section{Regularity of solutions}\label{regularity}
Let $\alpha\in(0,1],$ $k$ a nonnegative integer, $a=k+\alpha,$ and $\Omega\subset \R^N$ be a domain. Put $Q:=\Omega\times(\tau,T)$ for $0\leq \tau<T.$ Following \cite[page 4]{quittner-souplet} we define $C^{a,a/2}(Q):=\{f : |f|_{a;Q}<\infty \},$ where
\begin{equation}\label{holder:norms}
\begin{aligned}
&[f]_{\alpha;Q}:= \sup\bigg\{ \frac{|f(x,t)-f(y,s)|}{|x-y|^\alpha+|t-s|^\frac{\alpha}{2}}\::\: (x,t),(y,s)\in Q,\ (x,t)\neq (y,s)\bigg\},\\
&|f|_{a;Q}:=\sum_{|\beta|+2j\leq k}\sup_{Q}|D_x^\beta D_t^j f|+\sum_{|\beta|+2j= k}[D_x^\beta D_t^j f]_{\alpha;Q},
 \end{aligned}
 \end{equation}
and $D^\beta_x,D^j_t$ denote spatial and time derivatives of order $\beta\in \mathbb N_0^N$ and $j\in\mathbb N_0$ respectively.

The following is the Dirichlet version of \cite[Lemma 2.1]{saldana:weth:systems} complemented with standard regularity arguments (see for example \cite[Remark 48.3 and Remark 47.4 (iii)]{quittner-souplet} or \cite[Theorem 2.2]{babin}) to guarantee $C^{2+\gamma,1+\gamma/2}-$regularity if the right hand side of the equation is H\"{o}lder continuous.

\begin{lemma}\label{regularity:lemma}
Let $K>0$ be a given constant, $\Omega \subset \mathbb R^{N}$ be a smooth bounded domain, $I\subset \R$ open, $g\in L^\infty(\Omega\times I),$ and let $v\in C^{2,1}(\overline{\Omega}\times I)\cap C(\overline{\Omega\times I})$ be a classical solution of
\begin{equation*}
\begin{aligned}
v_t-\Delta v =g(x,t)  \quad \text{in $\Omega \times I$},\qquad v=0  \quad  \text{ on } \partial \Omega \times I,
\end{aligned}
\end{equation*}
where $\|v\|_{L^\infty(\Omega\times I)}+\|g\|_{L^\infty(\Omega\times I)}<K.$  Let ${\cal I}\subset I$ with $\operatorname{dist}({\cal I},\partial I)\geq 2.$  Then there are constants $\tilde C>0$ and $\tilde \gamma\in(0,1),$ depending only on $\Omega$ and $K$ such that 
\begin{align*}
|v|_{1+\tilde\gamma;\overline{\Omega}\times[s,s+2]}\leq \tilde C \qquad \text{ for all }s\in\cI.
\end{align*}
Moreover, if $|g|_{\alpha;\Omega\times I}<K$ for some $\alpha>0,$ then there are constants $C>0$ and $\gamma\in(0,1),$ depending only on $\Omega,$ $K,$ and $\alpha$ such that 
\begin{align*}
|v|_{2+\gamma;\overline{\Omega}\times[s+1,s+2]}\leq C \qquad \text{ for all }s\in\cI.
\end{align*}

In particular, if $I=(0,\infty)$, then the semiorbit $\{v(\cdot,t)\::\: t \geq 1\}$ is relatively compact in $C(\overline{\Omega}).$ Therefore $\omega(v)$ is a nonempty connected compact subset of $C(\overline{\Omega})$ satisfying  $\lim\limits_{t\to\infty}\inf\limits_{z\in\omega(v)}\|v(\cdot,t)-z\|_{L^\infty(\Omega)}=0.$
\end{lemma}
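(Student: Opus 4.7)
The plan is to reduce everything to estimates on a fixed parabolic cylinder by exploiting the autonomy of the equation under time translation. For $s\in\cI$, set $v_s(x,t):=v(x,s+t)$ and $g_s(x,t):=g(x,s+t)$ on $\overline{\Omega}\times[-2,2]$; since $\operatorname{dist}(\cI,\partial I)\geq 2$ this is well defined, $v_s$ solves $(\partial_t-\Delta)v_s=g_s$ with zero Dirichlet data on $\partial\Omega$, and the bound $\|v_s\|_{L^\infty}+\|g_s\|_{L^\infty}<K$ holds uniformly in $s$. All subsequent estimates are applied to $v_s$ on this fixed cylinder and, once shown uniform in $s$, translated back to $[s,s+2]$ or $[s+1,s+2]$.

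For the first assertion I would invoke the $L^p$ parabolic regularity theory up to the spatial boundary (as in \cite[Remark 48.3]{quittner-souplet}): since $g_s\in L^\infty\subset L^p$ for every $p<\infty$ and the lateral data vanish, one obtains $v_s\in W^{2,1}_p(\Omega\times(-1,2))$ with a bound depending only on $K$, $p$, and $\Omega$. Choosing $p>N+2$, the anisotropic parabolic Sobolev embedding into $C^{1+\tilde\gamma,(1+\tilde\gamma)/2}$ with $\tilde\gamma=1-(N+2)/p$ (valid up to the boundary thanks to the smoothness of $\Omega$) yields the uniform estimate $|v_s|_{1+\tilde\gamma;\overline{\Omega}\times[0,2]}\leq\tilde C$, which gives the first claim after translating back.

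For the second assertion the additional hypothesis $|g|_{\alpha;\Omega\times I}<K$ upgrades the right-hand side to a parabolic H\"older class whose norm is uniform in $s$. I would then apply the parabolic Schauder estimates up to the boundary (e.g.\ \cite[Theorem 2.2]{babin}). The initial-data compatibility issue that usually arises in Schauder theory is sidestepped by the interior-in-time nature of the estimate: the $C^{1+\tilde\gamma}$ bound already obtained on $\overline{\Omega}\times[0,2]$ serves as a regular datum at $t=0$, and the bootstrap then delivers $|v_s|_{2+\gamma;\overline{\Omega}\times[1,2]}\leq C$ for some $\gamma\in(0,1)$ and $C>0$ depending only on $\Omega$, $K$, and $\alpha$. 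The main obstacle throughout both parts is to keep constants independent of $s\in\cI$, but this is automatic because the heat operator is autonomous and only the uniform $L^\infty$ (resp.\ $C^\alpha$) bound on $v$ and $g$ enters the regularity machinery.

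The final statements about $\omega(v)$ are a direct consequence of the first assertion applied with $I=(0,\infty)$ and $\cI=[1,\infty)$: the family $\{v(\cdot,t)\}_{t\geq 1}$ is uniformly bounded in $C^{1+\tilde\gamma}(\overline{\Omega})$, which embeds compactly into $C(\overline{\Omega})$ by Arzel\`a--Ascoli, so the semiorbit is relatively compact. Standard semiflow arguments then yield that $\omega(v)$ is nonempty and compact; its connectedness follows because the map $t\mapsto v(\cdot,t)$ is continuous with connected image on each $[T,\infty)$ and $\omega(v)=\bigcap_{T\geq 1}\overline{\{v(\cdot,t):t\geq T\}}$ is a decreasing intersection of compact connected sets in the Hausdorff space $C(\overline{\Omega})$. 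The convergence $\lim_{t\to\infty}\inf_{z\in\omega(v)}\|v(\cdot,t)-z\|_{L^\infty(\Omega)}=0$ is then automatic from the relative compactness of the semiorbit and the definition of $\omega(v)$.
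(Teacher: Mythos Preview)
Your proposal is correct and follows essentially the same route as the paper: $L^p$ parabolic regularity plus Sobolev embedding for the $C^{1+\tilde\gamma}$ bound, then interior-in-time Schauder for the $C^{2+\gamma}$ bound, and Arzel\`a--Ascoli for the omega-limit statements. The only substantive difference lies in how the Schauder step is executed: the paper makes the interior-in-time localization explicit by setting $w(x,t):=(t-s)v(x,t)$, which vanishes on the entire parabolic boundary of $\overline{\Omega}\times[s,s+2]$ and satisfies $w_t-\Delta w=(t-s)g-v$, so that global Schauder estimates apply directly and yield the bound on $[s+1,s+2]$ after dividing out $t-s$. Your phrasing that the $C^{1+\tilde\gamma}$ bound ``serves as a regular datum at $t=0$'' together with a ``bootstrap'' is a bit loose (a $C^{1+\tilde\gamma}$ datum alone does not immediately yield $C^{2+\gamma}$ without compatibility); what you really need, and what you also gesture at, is precisely the interior-in-time Schauder estimate, for which the paper's multiplication trick is the standard concrete proof.
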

\begin{proof}
Fix $s\in \cI$ and set $Q:=\overline{\Omega}\times [s,s+2]\ \subset \ \overline{\Omega\times I}. $  Then $v\in W^{2,1}_{N+3}(Q)$ because $v\in C^{2,1}(Q).$ Then \cite[Theorem 7.30, p.181]{lieberman} implies the existence of $C_1>0$ depending only on $\Omega$ and $K$ such that
\begin{align*}
 \|D^2 v\|_{L^{N+3}(Q)}+\|v_t\|_{L^{N+3}(Q)}&\leq C_1(\|g\|_{L^{N+3}(Q)}+\|u\|_{L^{N+3}(Q)})\leq 2C_1|Q|^{\frac{1}{N+3}}K.
\end{align*}
Then, by a standard interpolation argument, $\|v\|_{W_{N+3}^{2,1}(Q)}\leq C_2$ for some constant $C_2(\Omega,K)=C_2>0.$  By Sobolev embeddings (see, for example, \cite[embedding (1.2)]{quittner-souplet} and the references therein), we then have that $v\in C^{1+\tilde\gamma,(1+\tilde\gamma)/2}(Q)$ for $\tilde\gamma=1-\frac{N+2}{N+3}\ \in\ (0,1),$ and there is a constant $C_3(\Omega)=C_3>0$ such that
\begin{align}\label{holder1}
 |v|_{1+\gamma;Q}\leq C_3 \|v\|_{W_{N+3}^{2,1}(Q)}\leq C_3 C_2.
 \end{align}

Now, assume $|g|_{\alpha;\Omega\times I}<K$ for some $\alpha>0.$ Let $\gamma=\min\{\alpha,\tilde\gamma\}$ and define $w:B\times I\to\R$ by $w(x,t):=(t-s)v(x,t).$ It follows that $w\equiv 0$ on $\partial_P Q$ \textemdash here $\partial_P$ denotes the parabolic boundary\textemdash and $w_t(x,t)-\Delta w(x,t) = (t-s)g(t,x)-v(x,t)$ in $Q.$  Then, by \cite[Theorem 48.2]{quittner-souplet} (see also \cite[Theorem 4.28]{lieberman} and \cite[Section 1.6.6]{cantrell:cosner}), there is a constant $C_4>0$ depending only on $\Omega$ and $\alpha$ such that
\begin{align*}
 |w|_{2+\gamma;Q}\leq C_4(\|w\|_{L^\infty(Q)}+|tg-v|_{\gamma;Q}).
\end{align*}
Then, by \eqref{holder1}, there is some constant $C_5>0$ depending only on $K,$ $\Omega,$ and $\alpha$ such that $ |w|_{2+\gamma;Q}\leq  C_5.$ This implies that $|v|_{2+\gamma;\overline{\Omega}\times [s+1,s+2]}\leq C$ for some constant $C>0$ depending only on $K,$ $\Omega,$ and $\alpha.$  Finally, the last claim follows from the Arzel\`{a}-Ascoli Theorem (see \cite[Proposition 53.3]{quittner-souplet}). 
\end{proof}

 \section[Dirichlet parabolic systems]{Dirichlet parabolic systems}\label{proofs}

For this section let 
\begin{align}\label{unif:bounded:u:i}
u_1,u_2\in C^{2,1}(\overline{B}\times(0,\infty))\cap C(\overline{B}\times[0,\infty))\cap L^\infty(B\times(0,\infty)) 
\end{align}
be nonnegative functions such that $u=(u_1,u_2)$ is a classical solution of \eqref{model:competitive:dirichlet} and such that assumptions (h1) and (h2) from Theorem \ref{main:theorem} are fulfilled. Note that $u_i$ satisfies $(u_i)_t-\Delta u_i = g_i(x,t)$ in $B\times(0,\infty)$ with $g_i(x,t):=f_i(t,|x|,u_i(x,t))-\alpha_i(|x|,t)u_1(x,t)u_2(x,t)$ for $i=1,2$. Moreover, by (h1),(h2), and \eqref{unif:bounded:u:i} there is $K>0$ such that
\begin{align*}
\|u_i\|_{L^\infty(B \times (0,\infty))}+\|g_i\|_{L^\infty(B \times (0,\infty))}< K \qquad \text{ for }i=1,2.
\end{align*}
 Then Lemma~\ref{regularity:lemma} implies that $|u_i|_{a;B\times[s,s+1]}<C$ for all $s\in [1,\infty)$, $i=1,2$ and for some $a\in(1,2)$ and $C>0$.  But this implies that $g_i$ is H\"{o}lder continuous in $B\times[1,\infty)$ for $i=1,2.$ Then, by Lemma \ref{regularity:lemma}, there are $\gamma\in(0,1)$ and $M>0$ such that 
\begin{align}\label{equicontinuity::of:u:i}
 |u_i|_{2+\gamma;B\times[s,s+1]}<M \quad \text{ for all }s\in [2,\infty),\ i=1,2.
\end{align}

In particular the semiorbits $\{u_i(\cdot,t):t\geq 1\}$ are precompact in $C(\overline{B})$ for $i=1,2$,  and
\begin{align}\label{unif:convergence}
 \lim_{t\to\infty}\inf_{(z_1,z_2)\in\omega(u)}\|u_1(\cdot,t)-z_1\|_{L^\infty(B)}+\|u_2(\cdot,t)-z_2\|_{L^\infty(B)}=0,
\end{align}
where $\omega(u)$ is as in \eqref{omega:u}.  By a standard compactness argument, the omega limit set $\omega(u)$ of $u=(u_1,u_2)$ and the omega limit set $\omega(u_i)$ are related as follows:
\begin{equation}
  \label{eq:3:neumann}
\omega(u_i)= \{z_i \::\: z=(z_1,z_2) \in \omega(u)\} \qquad \text{for $i=1,2$.}   
\end{equation}

\subsection{Linearization}

The linearization of \eqref{model:competitive:dirichlet} is the same as in \cite[Section 5]{saldana:weth:systems}. We recall it here for completeness. For $e\in\Sn$ define $(u_1^e,u_2^e)$ as in \eqref{linearization:v:e}. The same notation is used if the functions do not depend on time, that is, for a pair $z=(z_1,z_2)$ of functions $z_i: \overline{B} \to\R,$ $i=1,2$, we set $z_1^e(x):=z_1(x)-z_1(\sigma_e(x))$ and $z_2^e(x):=z_2(\sigma_e(x))-z_2(x)$ for all $ x\in \overline{B},\ t>0.$  

Since $u=(u_1,u_2)$ solves \eqref{model:competitive:dirichlet}, for fixed $e \in\Sn$ we have 
\begin{equation}\label{linear:dirichlet}
\begin{aligned}
(u^e_1)_t-\Delta u_1^e &=c^e_{11} u_1^e+c^e_{12} u_2^e \quad &&\text{ in }B(e)\times(0,\infty),\\
(u^e_2)_t-\Delta u_2^e &=c^e_{21} u_1^e+c^e_{22} u_2^e\quad &&\text{ in }B(e)\times(0,\infty),\\
u_1^e=u_2^e&= 0 \quad &&\text{ on } \partial B(e)\times(0,\infty),
\end{aligned}
\end{equation}
where $c_{ij}^e\in L^\infty(B\times(0,\infty))$ are given for  $i,j=1,2,$ $i\neq j,$ $x\in B,$ and $t>0$ by $c^e_{ij}(x,t):=\alpha_i(|x|,t) u_i(x,t)\geq 0$ and $c^e_{ii}(x,t):=\hat c_i^e(x,t)- \alpha_i(|x|,t) u_j(\sigma_e(x),t)$ with
\begin{align*}
\hat c_i^e(x,t):=\int_0^1 \partial_v f_i(t,|x|,su_i(x,t)+(1-s)u_i(\sigma_e(x),t))ds.
\end{align*}
Here $\partial_v f_i$ stands for the derivative of $(t,r,v)\mapsto f_i(t,r,v)$ with respect to $v$.  Moreover, by (h1),(h2), and \eqref{unif:bounded:u:i} there is $M\geq 1$ such that
\begin{equation}\label{M:definition}
\sum_{i,j=1}^2\|c^e_{ij}\|_{L^\infty(B\times(0,\infty))}\leq {M}\qquad \text{for all $e\in \Sn.$  }
\end{equation}

\subsection{Proof of Theorem \ref{main:theorem}}

To prove Theorem \ref{main:theorem} we use the following geometric characterization of foliated Schwarz symmetry from \cite{saldana:weth:systems}, which is inspired by a result of Brock \cite{brock:2003}.

\begin{coro}[Corollary 2.6 of \cite{saldana:weth:systems}]\label{sec:symm-char}
 Let $\cU\subset C(\overline{B})$ and suppose that the set 
 \begin{align*}
  \cM:=\{e\in \Sn \::\: z(x) \ge z(\sigma_e(x)) \text{ for all }x\in B(e) \text{ and } z \in \cU\}
 \end{align*}
contains a nonempty subset $\cN$ with the following properties
 \begin{itemize}
 \item[(i)] $\cN$ is relatively open in $\Sn$;
\item[(ii)] for every $e \in \partial \cN$ and $z \in \cU$ we have $z \le z \circ \sigma_e$ in $B(e)$. Here $\partial \cN$ denotes the relative boundary of $\cN$ in $\Sn$. 
 \end{itemize}
Then there is $p \in \Sn$ such that all elements of $\cU$ are foliated Schwarz symmetric with respect to $p$.
\end{coro}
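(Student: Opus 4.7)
The plan is to upgrade the one-sided inequalities on $\partial\cN$ into full hyperplane symmetries for every $z\in\cU$, and then combine those symmetries with the monotonicity encoded by $\cN\subset\cM$ to produce a common axis $\mathbb R p$ for which the definition of foliated Schwarz symmetry is satisfied.

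First I would show that $\cM$ is closed in $\Sn$. For each $z\in\cU$ and each $x\in\overline B$ the map $e\mapsto z(\sigma_e(x))$ is continuous on $\Sn$, uniformly in $x$, so if $e_n\in\cM$ and $e_n\to e\in\Sn$, then any $x\in B(e)$ lies in $B(e_n)$ for large $n$, and passing to the limit in $z(x)\ge z(\sigma_{e_n}(x))$ gives $e\in\cM$. Hence $\overline{\cN}\subset\cM$ and in particular $\partial\cN\subset\cM$. Combining $e\in\cM$ (which yields $z\ge z\circ\sigma_e$ on $B(e)$) with hypothesis (ii) (which yields $z\le z\circ\sigma_e$ on $B(e)$) produces $z(x)=z(\sigma_e(x))$ on $B(e)$, and hence on all of $\overline B$ since $\sigma_e^2=\id$. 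So each $z\in\cU$ is invariant under the reflection $\sigma_e$ for every $e\in\partial\cN$.

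Next I would identify the axis. Using the identity $\sigma_{\sigma_e(e')}=\sigma_e\sigma_{e'}\sigma_e$ together with the $\sigma_e$-invariance obtained above, a short computation shows that whenever $e\in\partial\cN$ and $e'\in\cM$ one also has $\sigma_e(e')\in\cM$. Thus $\cM$ is invariant under the closed subgroup $\cG\subset O(N)$ generated by $\{\sigma_e\::\: e\in\partial\cN\}$, and by the previous step every $z\in\cU$ is $\cG$-invariant as well. If $\cN=\Sn$ then $\cM=\Sn$, and applying the defining inequality at both $e$ and $-e$ forces each $z$ to be radially symmetric, hence foliated Schwarz symmetric with respect to any $p\in\Sn$. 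Otherwise $\partial\cN$ is nonempty, and an analysis of $\cG$ (a closed subgroup of $O(N)$ generated by reflections) combined with the relative openness of $\cN$ either again forces radial symmetry or produces a distinguished axis $\mathbb R p$ such that $\partial\cN$ contains the equator $\{e\in\Sn\::\: e\cdot p=0\}$; in the latter case every $z\in\cU$ is axially symmetric about $\mathbb R p$.

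Finally, with the axis fixed, the $\cG$-action on $\cN$ together with the closedness of $\cM$ extends the inclusion $\cN\subset\cM$ to $\{e\in\Sn\::\: e\cdot p>0\}\subset\cM$. For $e$ with $e\cdot p>0$ and $x\in B(e)$ one computes $\sigma_e(x)\cdot p=x\cdot p-2(x\cdot e)(e\cdot p)<x\cdot p$ while $|\sigma_e(x)|=|x|$, so the polar angle of $\sigma_e(x)$ strictly exceeds that of $x$; together with axial symmetry, the inequality $z\ge z\circ\sigma_e$ on all such $B(e)$ is equivalent to $z$ being nonincreasing in the polar angle $\theta=\arccos(\frac{x}{|x|}\cdot p)$. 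This is precisely foliated Schwarz symmetry with respect to $p$, which is the conclusion. The main obstacle is the axis identification in the second step: one must show that the combination of the relative openness of $\cN$ with the common symmetries generated by the reflections $\sigma_e$, $e\in\partial\cN$, either forces radial symmetry or pins down a unique axis; all other steps reduce to continuity, an involution identity, and the standard geometric translation between hemispherical inequalities and polar-angle monotonicity.
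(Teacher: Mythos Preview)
The paper does not actually prove this statement: it is quoted verbatim as Corollary~2.6 of \cite{saldana:weth:systems} and used as a black box, so there is no ``paper's own proof'' to compare against here.

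Regarding your attempt on its merits: the first step (closedness of $\cM$, hence $\partial\cN\subset\cM$, hence $z\equiv z\circ\sigma_e$ for $e\in\partial\cN$), the reflection identity $\sigma_{\sigma_e(e')}=\sigma_e\sigma_{e'}\sigma_e$, and the final translation of $\{e\cdot p>0\}\subset\cM$ into polar-angle monotonicity are all correct and cleanly stated. The genuine gap is exactly the one you flag: the axis identification. As formulated, your dichotomy ``either every $z$ is radial, or $\partial\cN$ contains the equator $\{e\cdot p=0\}$'' is not the right target. Indeed, if every $z\in\cU$ happens to be radial then $\cM=\Sn$, hypothesis~(ii) is vacuous, and $\cN$ may be an arbitrary small open disk whose boundary is nowhere near a great sphere; so the second alternative can fail without the first holding in any way traceable from $\cG$ alone.

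The cleaner intermediate claim (and this is what underlies the argument in \cite{saldana:weth:systems}) is $\cM\cup(-\cM)=\Sn$; once this is known, a separate characterization lemma produces the common $p$. Your $\cG$-invariance of $\cM$ is exactly what one needs to prove $\cM\cup(-\cM)=\Sn$: fix $e_0\in\cN$, slice by any $2$-plane $P$ through $e_0$, let $C$ be the component of $\cN\cap P$ containing $e_0$ and note its endpoints $e_1,e_2$ lie in $\partial\cN$; on the circle $P\cap\Sn$ the maps $\sigma_{e_1},\sigma_{e_2}$ act as reflections whose product is a rotation, and one checks directly that $\langle\sigma_{e_1},\sigma_{e_2}\rangle\cdot\overline C$ together with its antipodal image covers all of $P\cap\Sn$. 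Since $P$ was arbitrary, $\cM\cup(-\cM)=\Sn$. Rewriting your second step around this claim, rather than around the equator dichotomy, closes the gap.
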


Let $\cN:=\{e\in \Sn : u^e_i> 0\  \text{ in }  B(e)\times [T,\infty)\text{ for }i=1,2 \text { and some }T>0\}.$

\begin{lemma} \label{M:open:neumann:dirichlet}
The set $\cN$ is relatively open in $\Sn$. 
\end{lemma}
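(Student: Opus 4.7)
The plan is to apply Proposition \ref{general:framework:prop}(ii) at a large time $t$ to a time-shifted and suitably rescaled version of $(u_1^{e_1}, u_2^{e_1})$, and conclude that every $e'\in\Sn$ with $|e_1-e'|<\rho$ lies in $\cN$ for some $\rho>0$ independent of $t$. Fix $e_1\in\cN$, so there is $T_0>0$ with $u_i^{e_1}>0$ in $B(e_1)\times[T_0,\infty)$ for $i=1,2$. For $t\geq T_0+2$ set $\tilde u_i(x,s):=u_i(x,t+s-1)$ on $\overline B\times[0,1]$. Then $(\tilde u_1^{e_1},\tilde u_2^{e_1})$ satisfies \eqref{systems:general} with coefficients uniformly bounded by $M$ (as in \eqref{M:definition}) and $c_{12}^{e_1},c_{21}^{e_1}\geq 0$, while \eqref{equicontinuity::of:u:i} furnishes the equicontinuity bound $|\tilde u_i|_{2+\gamma;B\times[0,1]}<M$ uniformly in $t$.

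The principal difficulty is to produce the lower bound $\|\tilde u_i^{e_1}\|_{L^\infty(Q)}>\varepsilon_2$ required by Proposition \ref{general:framework:prop}(ii) with a $t$-uniform $\varepsilon_2$: in the presence of a semi-trivial limit profile, $u_i^{e_1}$ may decay along the orbit and no absolute lower bound is available. I overcome this by exploiting that the linearized system is homogeneous, so the rescaled pair $(\tilde u_1^{e_1}/\lambda,\tilde u_2^{e_1}/\lambda)$ still solves it with the same coefficients $c_{ij}^{e_1}$ for any common factor $\lambda=\lambda(t)>0$. The strategy is to choose $\lambda(t)$ so that, simultaneously, the equicontinuity bound on the rescaled pair survives and a matching lower bound on $\|\tilde u_i^{e_1}/\lambda\|_{L^\infty(Q)}$ holds for both $i=1,2$ with constants independent of $t$.

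The estimates of Subsection \ref{subsec:homogeneous:linear:equation} are the key tool. Writing each $u_i$ as a positive solution of $(u_i)_t-\Delta u_i - h_i u_i=0$ with $h_i(x,t):=f_i(t,|x|,u_i)/u_i - \alpha_i(|x|,t)u_j$ (bounded by (h1), (h2), $f_i(t,r,0)=0$, and strong-maximum-principle positivity of $u_i$ in $B\times(0,\infty)$), Corollary \ref{huska:polacik:safonov:corollary} gives $C^{-1}\operatorname{dist}(x,\partial B)^\vartheta\|u_i(\cdot,t)\|_{L^\infty}\leq u_i(x,t+s)\leq C\|u_i(\cdot,t)\|_{L^\infty}$ for $s\in[-1,1]$, uniformly in $t$. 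Choosing $\lambda(t)$ comparable to $\max_i\|u_i(\cdot,t)\|_{L^\infty(B)}$ preserves the $C^{2+\gamma,1+\gamma/2}$ bound on the rescaled pair and yields a uniform positive lower bound for $\inf_{Q_\delta}c_{ij}^{e_1}/\text{stuff}$ while keeping one of $\|\tilde u_j^{e_1}\|_{L^\infty(Q_\delta)}/\lambda(t)$ bounded below (after the interior reduction of Remark \ref{delta:definition}); Proposition \ref{general:framework:prop}(i) then transfers this bound to $\|\tilde u_i^{e_1}\|_{L^\infty(Q)}/\lambda(t)$ for the other index.

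With all hypotheses of Proposition \ref{general:framework:prop}(ii) now verified with constants independent of $t$, I obtain a uniform $\rho>0$ such that $u_i^{e'}(\cdot,t)\geq 0$ in $B(e')$ for $i=1,2$, every $t\geq T_0+2$, and every $e'\in\Sn$ with $|e_1-e'|<\rho$. Applying the strong maximum principle to the weakly coupled cooperative system \eqref{linear:dirichlet} (with $e$ replaced by $e'$) upgrades this to strict positivity in $B(e')\times(T_0+3,\infty)$, so $e'\in\cN$, and $\cN$ is relatively open. The hard part is the coordinated choice of $\lambda(t)$: equicontinuity, the coupling-coefficient lower bound on $Q_\delta$, and the two component lower bounds on $Q$ must all hold simultaneously with $t$-independent constants.
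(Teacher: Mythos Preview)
Your proposal misidentifies the difficulty and, in trying to overcome a nonexistent obstacle, introduces a genuine gap.

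\textbf{The paper's proof is one line of substance.} Given $e_1\in\cN$, pick the single time $T$ from the definition of $\cN$, shift $v_i(x,s):=u_i(x,s+T)$ to $s\in[0,1]$, and apply Proposition~\ref{general:framework:prop}(ii) once, with $\varepsilon_2:=\tfrac12\min_i\|v_i^{e_1}\|_{L^\infty(Q)}>0$ (positive simply because $v_i^{e_1}>0$ on the open set $B(e_1)\times(\tfrac17,\tfrac47)$). This yields some $\rho>0$, possibly depending on $e_1$ through $\varepsilon_2$ and $T$, such that $u_i^{e'}(\cdot,T+1)\ge 0$ in $B(e')$ for $|e'-e_1|<\rho$. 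The maximum principle for the cooperative linearized system \eqref{linear:dirichlet} then gives $u_i^{e'}>0$ on $B(e')\times(T+1,\infty)$, so $e'\in\cN$. No uniformity in $t$ is needed: openness only asks for \emph{some} neighborhood, and the maximum principle propagates positivity forward from a single time slice.

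\textbf{Why your normalization does not work as stated.} You rescale both components by a common factor $\lambda(t)$. This leaves the coupling coefficients $c_{12}^{e_1}=\alpha_1 u_1$ and $c_{21}^{e_1}=\alpha_2 u_2$ unchanged, so if (say) $u_1\to 0$ along a subsequence you still have $\inf_{Q_\delta}c_{12}^{e_1}\to 0$, and Proposition~\ref{general:framework:prop}(i) cannot be applied with a $t$-independent $\varepsilon_1$. Your phrase ``$\inf_{Q_\delta}c_{ij}^{e_1}/\text{stuff}$'' is not a statement; the coefficients in the rescaled system are literally the same $c_{ij}^{e_1}$. Moreover, even normalizing by $\max_i\|u_i(\cdot,t)\|_{L^\infty}$ does not control the \emph{difference} $u_i^{e_1}$, which can decay to $0$ independently of $u_i$ (e.g.\ when a limit profile is radial). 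The normalization trick you are reaching for is precisely what is deployed in Lemma~\ref{normalization:lemma:2:dirichlet} for points of $\partial\cN$, and there only \emph{one} component is rescaled (so that the new off-diagonal coefficient becomes $\alpha_1 v_n$ with the lower bound \eqref{eta:dirichlet}); it is neither needed nor correctly formulated for the openness lemma.
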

\begin{proof} Let $e\in \cN.$ Without loss we may assume that $e=e_1.$  Then $(u_1^{e_1},u_2^{e_1})$ is a solution of \eqref{linear:dirichlet}, and there is $T>0$ such that $u^{e_1}_i> 0$ in $B({e_1})\times(T,\infty).$ Let $M$ satisfy \eqref{M:definition} and \eqref{equicontinuity::of:u:i}.  Then, Proposition \ref{general:framework:prop} $(ii)$ applied to $v_i : B\times [0,1]\to [0,\infty)$ given by $v_i(x,t):=u_i(x,t-T)$ for $i=1,2$ yields the existence of $\rho>0$ such that $u_i^{e'}(\cdot,T+1)\geq 0$ in $B(e')$ for $e'\in\Sn$ with $|e'-{e_1}|<\rho.$ Since $u_i^{e'}$ also satisfies \eqref{linear:dirichlet}, the maximum principle for systems (see, e.g., \cite{protter}) implies $u_i^{e'}>0$ in $B(e')\times(T+1,\infty)$ and therefore $e'\in \cN$ for all $e'\in\Sn$ with $|e'-e|<\rho.$ Thus $\cN$ is open. 
\end{proof}

\begin{lemma}\label{normalization:lemma:2:dirichlet}
For every $e\in \partial \cN$ and every $z \in \omega(u)$ we have $z^e_1 \equiv z^e_2 \equiv 0$ in $B(e)$.
\end{lemma}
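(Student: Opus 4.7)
My plan is to argue by contradiction, showing that if some $z^*\in\omega(u)$ has $z_1^{*,e}\not\equiv 0$ or $z_2^{*,e}\not\equiv 0$ in $B(e)$, then $e$ itself belongs to $\cN$, contradicting $e\in\partial\cN$ together with the openness of $\cN$ from Lemma \ref{M:open:neumann:dirichlet}. A preliminary step is to show that $z_i^e\geq 0$ in $B(e)$ for every $z\in\omega(u)$ and $i=1,2$: choosing $e_m\in\cN$ with $e_m\to e$ and the corresponding times $T_m$ beyond which $u_i^{e_m}(\cdot,t)>0$ in $B(e_m)$, and picking a diagonal subsequence $t_{n_m}\geq T_m$ along which $u(\cdot,t_{n_m})\to z$ uniformly, one passes to the limit using $\sigma_{e_m}\to\sigma_e$ uniformly on $\overline B$ together with uniform continuity of each $u_i$.

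To exploit the dynamics, I would use the equicontinuity \eqref{equicontinuity::of:u:i} and Arzel\`{a}--Ascoli to extract, from any sequence $s_n\to\infty$ with $u(\cdot,s_n)\to z^*$, a subsequential limit $\hat u=(\hat u_1,\hat u_2)\in C^{2,1}_{\rm loc}(\overline B\times\R)$ of the time translates $u(\cdot,\cdot+s_n)$. The limit $\hat u$ solves a parabolic system of the same structure with bounded coefficients, satisfies $\hat u(\cdot,0)=z^*$, and its linearized coefficients in \eqref{linear:dirichlet} pass to limits retaining $\hat c_{12}^e,\hat c_{21}^e\geq 0$ and the bound \eqref{M:definition}. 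By invariance of $\omega(u)$ under the semiflow, each slice $\hat u(\cdot,t)$ lies in $\omega(u)$, so the preliminary step gives $\hat u_i^e\geq 0$ on $B(e)\times\R$, with at least one of them nontrivial at $t=0$ by hypothesis.

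In the coexistence case $\hat u_1,\hat u_2\not\equiv 0$, both are strictly positive in $B\times(T_*,\infty)$ by the scalar strong maximum principle, so the coupling coefficients $\hat c_{12}^e=\alpha_1\hat u_1$ and $\hat c_{21}^e=\alpha_2\hat u_2$ are uniformly positive on interior compact sets for large time. The cooperative strong maximum principle for the $2\times 2$ system satisfied by $(\hat u_1^e,\hat u_2^e)$ (see \cite{protter}) then upgrades the nontriviality of one component to strict positivity of both, yielding uniform interior lower bounds for $\|\hat u_i^e\|_{L^\infty(Q_\delta)}$ after a suitable time shift. Translating these back to $u(\cdot,\cdot+s_n)$ for large $n$ and invoking Proposition \ref{general:framework:prop}(ii) produces $\rho>0$ such that $u_i^{e'}(\cdot,\tau_n)\geq 0$ in $B(e')$ for all $|e'-e|<\rho$, in particular for $e'=e$. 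Nontriviality of $u_i^e(\cdot,\tau_n)$ (inherited from the strict positivity of the limit) together with the scalar strong maximum principle then gives $u_i^e>0$ in $B(e)\times(\tau_n,\infty)$ for $i=1,2$, placing $e\in\cN$.

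The semi-trivial case, say $\hat u_2\equiv 0$ (so $\hat u_2^e\equiv 0$ and $\hat u_1^e\not\equiv 0$), requires the normalization alluded to in the introduction. Using $f_2(\cdot,\cdot,0)=0$ from (h1), the second component satisfies $(u_2)_t-\Delta u_2=g_2 u_2$ with $g_2:=f_2(\cdot,\cdot,u_2)/u_2-\alpha_2 u_1\in L^\infty$, i.e.\ $u_2$ is a positive solution of a linear homogeneous parabolic Dirichlet problem. Setting $\phi_2(s):=\|u_2(\cdot,s)\|_{L^\infty(B)}$ and $v_n(x,t):=u_2(x,s_n+t)/\phi_2(s_n)$, Corollary \ref{huska:polacik:safonov:corollary} furnishes uniform upper bounds and interior lower bounds for $v_n$ on bounded time intervals, so a subsequence converges to a positive limit $\tilde u_2$. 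The rescaled pair $(u_1^e(\cdot,\cdot+s_n),v_n^e)$ satisfies a weakly coupled linear system whose cross-coupling coefficient $\alpha_2 v_n$ remains uniformly bounded below in the interior, restoring the cooperative argument of the coexistence case; undoing the rescaling (which contributes only factors controlled by \eqref{huska:quotient:1}) and applying Proposition \ref{general:framework:prop}(ii) again delivers $u_i^e(\cdot,\tau_n)\geq 0$ in $B(e)$ for $i=1,2$, hence $e\in\cN$. The main technical obstacle is precisely this last step: verifying that the rescaled linearization falls inside the scope of Proposition \ref{general:framework:prop} with constants independent of $n$, and converting its conclusion back to the unnormalized solution despite the nonconstant factors $\phi_2(s_n+\tau)/\phi_2(s_n)$. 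The Harnack-type control \eqref{huska:quotient:1} of Lemma \ref{huska:polacik:safonov:lemma} is exactly what makes this feasible.
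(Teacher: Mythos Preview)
Your contradiction strategy and the preliminary step ($z_i^e\ge 0$ for $e\in\partial\cN$) are fine, but there is a genuine gap at the point where you ``translate back'' and invoke Proposition~\ref{general:framework:prop}(ii) for the actual solution $u$ at the direction $e\in\partial\cN$. That proposition is stated for pairs satisfying \eqref{systems:general}, which in particular requires $u_i^{e}\ge 0$ on $B(e)\times I$ for the finite interval on which you apply it. Your preliminary step only yields $z_i^e\ge 0$ for limit profiles, and the convergence $u(\cdot,\cdot+s_n)\to\hat u$ with $\hat u_i^e>0$ in the interior does not force $u_i^e(\cdot,\cdot+s_n)\ge 0$ up to $\partial B(e)$ for any finite $n$; near $H(e)$ and near the corner set $H(e)\cap\partial B$ the limit vanishes, so positivity cannot be inherited by approximation. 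Without this hypothesis, the chain of Lemmas~\ref{estimate1}--\ref{positive:at:corners} underlying Proposition~\ref{general:framework:prop} is unavailable, and the perturbation conclusion $u_i^{e'}(\cdot,\tau_n)\ge 0$ does not follow. The same obstruction appears verbatim in your semi-trivial case after normalization.

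The paper's proof sidesteps this by never applying Proposition~\ref{general:framework:prop} at $\hat e$ itself. Instead it fixes $\rho>0$ in advance (depending only on the quantitative constants), then chooses $\bar e\in\cN$ with $|\bar e-\hat e|<\rho/2$; since $\bar e\in\cN$, the positivity $u_i^{\bar e}>0$ on $B(\bar e)\times I_{\bar n}$ holds \emph{by definition} for some large $\bar n$, so the hypotheses of Proposition~\ref{general:framework:prop} are met at $\bar e$. The perturbation then reaches every $e'$ with $|e'-\bar e|<\rho$, in particular $e'=\hat e$, and the maximum principle propagates to give $\hat e\in\cN$, the desired contradiction. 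Note also that the paper normalizes $u_1$ uniformly (not just in the semi-trivial case), which lets a single argument via Proposition~\ref{general:framework:prop}(i) produce the lower bound needed for (ii); this avoids your coexistence/semi-trivial case split and the attendant issues with passing to an entire limit $\hat u$ of a nonautonomous system.
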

\begin{proof}
Without loss we can assume that $\partial N\neq \emptyset.$ Let $z=(z_1,z_2) \in\omega(u).$  We will only show that $z_2^e\equiv 0$ in $B(e)$ for all $e \in \partial \cN$, since the same
argument can be adjusted to show that $z_1^e\equiv 0$ in $B(e)$ for all $e\in \partial \cN$.  Arguing by contradiction, assume there is $\hat e\in\partial \cN$ and $k>0$ such that 
\begin{align}\label{contradiction:hypothesis1}
 \|z_2^{\hat e}\|_{L^\infty(B(\hat e))}>k.
\end{align}

We now use a normalization argument to construct a suitable pair of functions that satisfies the assumptions of Proposition \ref{general:framework:prop} (i) and (ii). Let $t_n\to\infty$ be an increasing sequence with $t_1>6$ and such that $u_i(\cdot,t_n)\to z_i$ uniformly in $\overline B$ for $i=1,2.$ Define $I_n:=[t_n-3,t_n+3] \subset \R$ and $\beta_n:=\|u_1(\cdot,t_n)\|_{L^\infty(B)}$ for $n\in\mathbb N,$ and the functions $v_n : \overline{B} \times I_n \to \R$ given by $v_n(x,t):= \frac{u_1(x,t)}{\beta_n}$for $n\in\mathbb N.$ By Corollary \ref{huska:polacik:safonov:corollary} there exists $\eta>1$ and $\vartheta>0$ such that 
\begin{equation}
  \eta \geq v_n \geq \frac{1}{\eta}\operatorname{dist}(x,\partial B)^\vartheta\qquad \text{on $B\times I_n$}\quad
 \text{for all 
   $n\in\mathbb N.$} \label{eta:dirichlet}
\end{equation}
Moreover, we have that $|v_n|_{1+\tilde \gamma;{B}\times[s,s+2]}< \tilde C$ for all $s\in[t_n-2,t_n+2],$ $n\in\mathbb N$ and for some $\tilde \gamma\in(0,1)$ and $\tilde C>0.$  This follows from Lemma~\ref{regularity:lemma} and the fact that $v_n$ satisfies
\begin{equation}\label{preregularity}
\begin{aligned}
(v_n)_t-\Delta v_n &= (c - \alpha_1 u_2)v_n\ \  \text{ in }B\times I_n,\quad v_n=0 \ \ \text{in $\partial B \times I_n,$}
\end{aligned}
\end{equation}
where $c(x,t):=\int_0^1 \partial_v f_1(t,|x|,su_1(x,t))ds$ for $x\in B$ and $t>0.$  Note that $c \in C^{\beta,\beta/2}(B\times [2,\infty))$ for some $\beta>0$ by our assumptions in (h1) and \eqref{equicontinuity::of:u:i}.  This implies that the right hand side of \eqref{preregularity} is H\"{o}lder continuous. Then, by Lemma \ref{regularity:lemma} and \eqref{equicontinuity::of:u:i}, there are $\gamma\in(0,1)$ and $M_1>0$ such that
\begin{align}\label{hoeldercontinuos:1:dirichlet}
 |u_2|_{2+\gamma;{B}\times[s,s+1]}+|v_n|_{2+\gamma;{B}\times[s,s+1]}< M_1
\end{align}
for all $s\in[t_n-1,t_n+1],$ $n\in\mathbb N.$ For $e \in \Sn$ and $n \in \mathbb N$ we also consider $v_n^e : \overline{B} \times I_n \to \R$ given by $v_n^e(x,t):= v_n(x,t)-v_n(\sigma_e(x),t),$ and we note that 
\begin{equation}\label{normalized:equations:dirichlet}
\begin{aligned}
(v_n^e)_t-\Delta v_n^e- c_{11}^e v_n^e &=\tilde c_{12} u_2^e &&\qquad \text{in $B(e) \times I_n$,}\\
(u_2^e)_t-\Delta u_2^e- c_{22}^e u_2^e &=\tilde c_{21}^e v^e_n &&\qquad \text{in $B(e) \times I_n$,}\\
v_n^e=u_2^e &= 0 &&\qquad \text{on $\partial B(e) \times I_n$,}
\end{aligned}
\end{equation}
where $\tilde c_{12}:= \frac{c_{12}^e}{\beta_n}=\alpha_1 v_n,$ $\tilde c_{21}^e:= c_{21}^e \beta_n=c_{21}^e\|u_1(\cdot,t_n)\|_{L^\infty(B)},$ and the coefficients $c_{ij}^e$ are as in \eqref{linear:dirichlet} for $i,j=1,2.$  Then, by \eqref{M:definition}, \eqref{eta:dirichlet}, (h1), and (h2), there is $M\geq M_1$ such that 
\begin{equation}\label{coefficients:bound}
\begin{aligned}
 \|c_{11}^e\|_{L^\infty(B(e)\times I_n)}&+\|c_{22}^e\|_{L^\infty(B(e)\times I_n)}\\
 &+\|\tilde c_{12}^e\|_{L^\infty(B(e)\times I_n)}+\|\tilde c_{21}^e\|_{L^\infty(B(e)\times I_n)}\leq M.
\end{aligned}
\end{equation}
for all $e\in \Sn.$ By \eqref{contradiction:hypothesis1}, \eqref{unif:convergence}, \eqref{hoeldercontinuos:1:dirichlet}, and Remark \ref{delta:definition}, there is $\delta>0$ such that, passing to a subsequence of $t_n,$ we have
\begin{align}\label{contradiction:hypothesis2}
 \|u_2^{\hat e}(\cdot,t_n)\|_{L^\infty(B(\hat e)\backslash[\partial B(\hat e)]_\delta}>k\qquad \text{ for all }n\in\mathbb N.
\end{align}

Moreover, by assumption (h2) and \eqref{eta:dirichlet}, there is $\varepsilon_1\in(0,k)$ such that
\begin{align}\label{c21:equation}
 \tilde c_{12}=\alpha_1 v_n>\frac{\alpha_*}{\eta}\operatorname{dist}(x,\partial B)^\vartheta>\varepsilon_1 \qquad \text{ in }\quad (B(e)\backslash[\partial B(e)]_\delta)\times I_n
\end{align}
for all $e\in \Sn$ and $n\in\mathbb N$ with $\delta$ as in \eqref{contradiction:hypothesis1}. For the choices of $M,$ $\delta,$ $\gamma,$ and $\varepsilon_1$ made in this proof, let $\sigma>0$ be the constant given by Proposition \ref{general:framework:prop} $(i).$  Without loss we may assume that $\sigma\leq k.$ Moreover, let $\rho>0$ be the constant given by Proposition \ref{general:framework:prop} $(ii)$ with $\varepsilon_2=\sigma$ and $M,$ $\delta,$ $\gamma,$ as above.

By \eqref{hoeldercontinuos:1:dirichlet}, \eqref{contradiction:hypothesis2}, and since $\hat e\in\partial \cN,$ there is $\bar e\in\cN$ with $|\bar e-\hat e|<\rho/2$ and
\begin{align}\label{perturbed:function}
 \|u_2^{\bar e}(\cdot,t_n)\|_{L^\infty(B(\bar e)\backslash [\partial B(\bar e)]_\delta}>k\qquad \text{ for all }n\in\mathbb N.
\end{align}

Because $\bar e\in\cN$ and $t_n\to\infty,$ there is $\bar n\in\mathbb N$ such that $u_2^{\bar e}>0$ and $v_{\bar n}^{\bar e}>0$ in $B(\bar e)\times I_{\bar n}.$  Taking into account \eqref{hoeldercontinuos:1:dirichlet}, \eqref{normalized:equations:dirichlet}, and \eqref{coefficients:bound}, we see that the functions $w_1,w_2:B\times[0,1]\to\infty$ given by $w_1(x,t):=v_{\bar n}(x,t_{\bar n}-\frac{2}{7}+t)$ and $w_2(x,t):=u_2(x,t_{\bar n}-\frac{2}{7}+t),$ satisfy the assumptions of Proposition \ref{general:framework:prop}. Then, by \eqref{c21:equation} and \eqref{perturbed:function},
Proposition \ref{general:framework:prop} $(i)$ yields that $\|v_{\bar n}^{\bar e}\|_{L^\infty(B(\bar e)\times [t_{\bar n}-\frac{1}{7},t_{\bar n}+\frac{2}{7}])}>\sigma.$  This together with \eqref{perturbed:function} implies, by Proposition \ref{general:framework:prop} $(ii),$ that $u_2^{e'}(\cdot,t_{\bar n}+\frac{6}{7})\geq 0$ and $v_{\bar n}^{e'}(\cdot,t_{\bar n}+\frac{6}{7})=\frac{u_1^{e'}(\cdot,t_{\bar n}+\frac{6}{7})}{\beta_{\bar n}}\geq 0,$ for all $e'\in\Sn$ with $|e'-\bar e|<\rho.$  Then the maximum principle for parabolic systems (see, e.g., \cite[Ch. 3 Sec. 8 Thm. 13]{protter}) implies that $u_2^{e'}>0$ and $u_1^{e'}>0$ in $B(e')\times(t_{\bar n}+\frac{6}{7},\infty),$ and therefore $e'\in\cN$ for all $e'\in\Sn$ with $|e'-\bar e|<\rho.$  But this yields in particular that $\hat e\in\cN,$ which contradicts the fact that $\hat e\in\partial\cN,$ since $\cN$ is open by Lemma \ref{M:open:neumann:dirichlet}. Therefore $z_2^{\hat e}\equiv 0$ in $B(\hat e)$ as claimed and this ends the proof.
\end{proof}

\begin{proof}[Proof of Theorem \ref{main:theorem}]
Define $\cU:=\omega(u_1) \cup -\omega(u_2) = \{z_1,-z_2\::\: z \in \omega(u)\},$ where the last equality is a consequence of (\ref{eq:3:neumann}). Also let $\cM:=\{e\in \Sn \::\:z^e \ge 0 \quad\text{ in } B(e)  \text{ for all } z\in \cU\}.$  Then we have that $\cN\subset \cM.$ Moreover, for $e\in \Sn$ as in (h3), we have $u_i^e(\cdot,0)\geq 0,\; u_i^e(\cdot,0)\not\equiv 0$ in $B(e)$ for $i=1,2.$  The maximum principle then implies that $u_i^e> 0$ on $B(e) \times (0,\infty)$ for $i=1,2$, so that $e \in \cN$ and thus $\cN$ is nonempty. Moreover $\cN$ is a relatively open subset of $\Sn$ by Lemma \ref{M:open:neumann:dirichlet} and, by Lemma \ref{normalization:lemma:2:dirichlet}, $z\equiv z\circ\sigma_e$ for all $z\in\cU$ and $e\in\partial\cN.$ The result now follows from Corollary~\ref{sec:symm-char}. 
\end{proof}

\begin{remark}\label{remark:maximum:principles:for:small:domains}
 The upper bound in \eqref{eta:dirichlet} holds only in the intervals $I_n=[t_n-3,t_n+3]$ for $n\in\mathbb N.$  In particular, the norm $\|v_n(\cdot,s_n)\|_{L^\infty(B)}$ could tend to infinity for some sequence $s_n\to\infty$ (this would occur, e.g., if $u_1(\cdot,t_n)\to 0$ but $u_1(\cdot,s_n)\not\to 0$ as $n\to\infty$). In this case the coefficient $\tilde c_{12}$ in the linearization \eqref{normalized:equations:dirichlet} can not be uniformly bounded independently of $n\in\mathbb N.$ This is the reason why the \emph{eventual} positivity of the perturbed difference functions\textemdash $(v^{e'}_n,u_2^{e'})$ in the proof of Lemma \ref{normalization:lemma:2:dirichlet}\textemdash is crucial.  An approach based on maximum principles for small domains as in \cite{polacik:systems} would have the advantage of requiring weaker regularity assumptions on the nonlinearity, but this technique typically only guarantees \emph{asymptotic} positivity of $(v^{e'}_n,u_2^{e'})$ and therefore, to perform a perturbation argument, one needs that the coefficients of the linearization are uniformly bounded \emph{independently} of $(v^{e'}_n,u_2^{e'})$ for all times, which, as explained above, does not happen in general.  For scalar equations, eventual positivity using maximum principles for small domains was obtained in \cite[Proposition 6.11]{huska}, but an extension of this result to the case of linear systems remains, up to our knowledge, an open question. 
\end{remark}

\bibliographystyle{plain}
\bibliography{dirichlet}

\end{document}